\theoremstyle{plain}
\newtheorem{thm}{Theorem}[section]
\newtheorem{prop}[thm]{Proposition}
\newtheorem{lem}[thm]{Lemma}
\newtheorem{cor}[thm]{Corollary}
\newtheorem*{claim*}{Claim}
\theoremstyle{definition}
\newtheorem{defn}[thm]{Definition}
\newtheorem{ex}[thm]{Example}
\newtheorem{rem}[thm]{Remark}
\newtheorem{conj}[thm]{Conjecture}
\theoremstyle{remark}
\numberwithin{equation}{thm}
\def\Ker{\operatorname{Ker}}
\def\mod{\mathrm{mod}}
\newcommand{\rmI}{\mathrm{I}}
\newcommand{\fkm}{\mathfrak{m}}\newcommand{\fkn}{\mathfrak{n}}
\def\gr{\mbox{\rm gr}}
\def\PF{\operatorname{PF}}
\def\gcd{\operatorname{gcd}}
\title[PF numbers and defining ideals in stretched numerical semigroup rings]{Pseudo-Frobenius numbers and defining ideals in stretched numerical semigroup rings}
\author{Do Van Kien}
\address{Department of Mathematics, Hanoi Pedagogical University 2, Phuc Yen, Vinh Phuc, Vietnam}
\email{dovankien@hpu2.edu.vn}
\author{Naoyuki Matsuoka}
\address{Department of Mathematics, School of Science and Technology, Meiji University, 1-1-1 Higashi-mita, Tama-ku, Kawasaki 214-8571, Japan}
\email{naomatsu@meiji.ac.jp}
\author{Taiga Ozaki}
\address{Department of Mathematics, School of Science and Technology, Meiji University, 1-1-1 Higashi-mita, Tama-ku, Kawasaki 214-8571, Japan}
\email{ozaki0422math.meijiu@gmail.com}
\subjclass[2020]{Primary 13D02; Secondary 13A02, 13A30, 13G05, 20M25}
\thanks{{\em Key words and phrases.} Numerical semigroup ring, Defining ideal, Pseudo-Frobenius number, Minimal free resolution, Stretched local ring}
\newcommand{\detid}{\operatorname{I}}
\newcommand{\Apery}{\operatorname{Ap}}
\newcommand{\CMtype}{\operatorname{r}}
\begin{document}
\maketitle

\setlength{\baselineskip}{17pt}


\begin{abstract}
	The pseudo-Frobenius numbers of a numerical semigroup $H$ are deeply connected to the structure of the defining ideal of its semigroup ring $k[H]$. 
	In this paper, we resolve a certain conjecture related to this connection under the assumption that $k[H]/(t^a)$ is stretched, where $a$ is the multiplicity of $H$. 
	Furthermore, we provide numerical conditions for the tangent cone of $k[H]$ to be Cohen-Macaulay.
\end{abstract}

\section{Introduction}

In commutative algebra, the study of defining ideals and free resolutions of finitely generated algebras over a field has been a fundamental area of research, as these structures contain rich information about the algebraic properties of the rings. Among the various examples studied in commutative algebra, numerical semigroup rings stand out as particularly intriguing objects due to their seemingly simple structure as one-dimensional finitely generated algebras.

Despite their simplicity, our understanding of the defining ideals of numerical semigroup rings remains surprisingly limited. 
For instance, Herzog \cite{herzog} fully characterized the defining ideals of 3-generated numerical semigroup rings, but for semigroups with more generators, the situation becomes significantly more complex. For 4-generated semigroups, partial results have been obtained: symmetric semigroups (\cite{bresinsky}), pseudo-symmetric semigroups (\cite{komeda}), and almost symmetric semigroups (\cite{eto}). 
For $n(\ge 4)$-generated cases, research has primarily focused on specific classes, such as those generated by arithmetic sequences \cite{gimenez} or repunit numbers \cite{branco, rosales1}, leaving a comprehensive understanding out of reach.

A deeper understanding of these defining ideals could have applications in areas such as coding theory and algebraic geometry, where numerical semigroups naturally arise.
In fact, the numerical semigroup ring $k[H]$ of a numerical semigroup $$H=\left<a_1, a_2,\ldots , a_n\right> =\{\lambda_1 a_1 + \lambda_2 a_2 + \cdots + \lambda_n a_n \mid 0 \le \lambda_1, \lambda_2, \ldots \lambda_n \in \mathbb{Z}\}$$ corresponds to the monomial curve $C(H) = \{(t^{a_1}, t^{a_2}, \ldots , t^{a_n}) \mid t \in k\} \subseteq k^n$, which geometrically represents a one-dimensional algebraic curve embedded in $k^n$. The defining ideal $I_H$ of $k[H]$ describes the defining equation of $C(H)$.

Here let us introduce the definition of pseudo-Frobenius numbers of a numerical semigroup, which plays an important role in the present paper.
For a numerical semigroup $H$, we put
$$
\PF(H) = \{\alpha \in \mathbb{Z} \setminus H \mid \alpha + h \in H\text{ for all $0< h \in H$}\}
$$
and we call an element $\alpha \in \PF(H)$ a pseudo-Frobenius number of $H$. The maximum integer in $\PF(H)$, which is trivially equal to $\max(\mathbb{Z} \setminus H)$ is called the Frobenius number of $H$.
In the first and second authors' previous collaborative work \cite{goto} with S. Goto and H. L. Truong, we discovered that the behavior of pseudo-Frobenius numbers of numerical semigroups could potentially describe the structure of defining ideals of their associated semigroup rings. This observation led us to the following conjecture.

\begin{conj}[by discussion between the the first author, the second author, D. T. Cuong, and H. L. Truong]\label{conj}
Let $H = \left< a_1, a_2,\ldots , a_n\right>$ be an $n$-generated numerical semigroup. Then the following conditions are equivalent.
\begin{enumerate}
    \item $I_H = \detid_2 \left( \begin{smallmatrix} f_1 & f_2 & \cdots & f_n\\ g_1 & g_2 & \cdots & g_n \end{smallmatrix} \right)$ for some homogeneous elements $f_1, f_2, \ldots , f_n, g_1, g_2, \ldots , g_n \in S = k[X_1,X_2,...,X_n]$ with positive degrees.
    \item $I_H = \detid_2 \left( \begin{smallmatrix} X_1^{\ell_1} & X_2^{\ell_2} & \cdots & X_{n-1}^{\ell_{n-1}} & X_n^{\ell_n}\\ X_2^{m_2} & X_3^{m_3} & \cdots & X_n^{m_n} & X_1^{m_1} \end{smallmatrix} \right)$ for some integers $\ell_1, \ell_2, \ldots, \ell_n, m_1, m_2, \ldots ,m_n > 0$, after suitable permutations on $a_1, a_2,\ldots , a_n$.
    \item The set $\PF(H)$ forms an arithmetic sequence of length $n-1$.
\end{enumerate}
Here, $\detid_2(M)$ denotes the ideal of $S$ generated by $2 \times 2$ minors of a matrix $M$ whose entries are in $S$.
\end{conj}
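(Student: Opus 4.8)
The plan is to establish the cyclic chain of implications $(2)\Rightarrow(1)\Rightarrow(3)\Rightarrow(2)$. Grade $S=k[X_1,\dots,X_n]$ by $\deg X_i=a_i$, so that $R=k[H]=S/I_H$ is a graded one-dimensional Cohen--Macaulay ring with $\depth_S R=1$; by the Auslander--Buchsbaum formula its minimal graded free resolution over $S$ has length $n-1$, say $0\to F_{n-1}\to\cdots\to S\to R\to 0$ with $F_{n-1}=\bigoplus_{i=1}^{r}S(-b_i)$. The implication $(2)\Rightarrow(1)$ is immediate: the matrix in $(2)$ is a special instance of the matrix in $(1)$ with $f_j=X_j^{\ell_j}$ and $g_j=X_{j+1}^{m_{j+1}}$ (indices read cyclically), all of positive degree.

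For $(1)\Rightarrow(3)$ I would use that $\operatorname{codim}I_H=n-1$ coincides with the generic codimension of the ideal of $2\times2$ minors of a $2\times n$ matrix. Writing the matrix of $(1)$ as a homogeneous map $\varphi\colon F=\bigoplus_{j=1}^{n}S(-u_j)\to G=S(-v_1)\oplus S(-v_2)$, the Eagon--Northcott complex of $\varphi$ thus resolves $R$, and it is minimal because every entry of $\varphi$ has positive degree. Its top term is
\[
F_{n-1}=\bigoplus_{p=0}^{n-2}S\bigl(-\textstyle\sum_{j}u_j+(p+1)v_1+(n-1-p)v_2\bigr),
\]
so $r=n-1$ and the shifts $b_p=\sum_j u_j-(p+1)v_1-(n-1-p)v_2$ form an arithmetic progression with common difference $v_2-v_1$. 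Dualizing the resolution and twisting by $\omega_S=S(-\sum_j a_j)$ identifies $\omega_R=\Ext^{\,n-1}_S(R,\omega_S)$, whose minimal generators lie in degrees $\{\sum_j a_j-b_i\}$; since it is classical that for a numerical semigroup ring these degrees are precisely $\{-f:f\in\PF(H)\}$, we obtain $f_i=b_i-\sum_j a_j$. Hence $\PF(H)$ is an arithmetic progression of length $n-1$, establishing $(3)$.

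The crux is $(3)\Rightarrow(2)$, which I expect to be the main obstacle. Starting from $\PF(H)=\{c,c+d,\dots,c+(n-2)d\}$ and $r=n-1$, the goal is to produce positive integers $\ell_1,\dots,\ell_n,m_1,\dots,m_n$ and to prove that the $\binom{n}{2}$ minors of the pure-power matrix of $(2)$ both lie in $I_H$ and minimally generate it. My proposed route is via the row-factorization matrices $\RF(f_i)$: each membership $f_i+a_j\in H$ records a factorization, and if $I_H$ were determinantal of the required shape then these matrices would be forced into a single ``circulant'' off-diagonal pattern whose entries are exactly the sought exponents. I would first show that the arithmetic-progression hypothesis rigidifies the collection $\{\RF(f_i)\}$ into this pattern after a permutation of $a_1,\dots,a_n$, then read off $\ell_j,m_j$ and confirm through an $\Apery$-set (equivalently, Hilbert-series) count that the resulting minors exhaust a minimal generating set.

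The serious difficulty is that the step ``$\PF(H)$ arithmetic $\Rightarrow$ the $\RF(f_i)$ are simultaneously circulant'' is not formal: the factorizations witnessed by different pseudo-Frobenius numbers need not be mutually compatible, and without extra control one cannot exclude minimal generators of $I_H$ beyond the $\binom{n}{2}$ minors. This is precisely the conjectural heart of the statement, and I expect a clean proof to require an additional structural hypothesis that collapses the admissible factorization patterns and pins down the Apéry set --- the stretchedness of $k[H]/(t^{a})$ assumed in the present paper being exactly such a hypothesis. I would therefore organize the argument so that $(2)\Rightarrow(1)\Rightarrow(3)$ holds unconditionally, with $(3)\Rightarrow(2)$ carried out under that hypothesis, thereby isolating the obstruction that separates the general conjecture from the case resolved here.
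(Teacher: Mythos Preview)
Your assessment is accurate: the statement is labelled a \emph{conjecture} in the paper and is not proved there in full generality. The paper establishes it only under the extra hypothesis that $k[H]/(t^{a_1})$ is stretched (Theorem~\ref{main theorem}), exactly as you anticipate in your final paragraph. Your treatment of $(2)\Rightarrow(1)$ and $(1)\Rightarrow(3)$ is correct and coincides with the paper's: the authors also observe that $(1)\Rightarrow(3)$ holds for arbitrary $H$ via the grading of the Eagon--Northcott complex (they refer to \cite{kien} for the details), and $(2)\Rightarrow(1)$ is immediate.

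Where your proposal diverges is in the route you sketch for $(3)\Rightarrow(2)$ under stretchedness. You propose to force the row-factorization matrices $\RF(f_i)$ into a simultaneous ``circulant'' pattern, but this remains a plan rather than an argument: you give no mechanism by which the arithmetic-progression hypothesis actually rigidifies the factorizations, and it is unclear how stretchedness would enter such an argument. The paper proceeds quite differently and much more concretely. It first pins down the Ap\'ery set as $\Apery(H,a_1)=\{0,a_2,\dots,a_n\}\cup\{2a_\lambda,\dots,\ell a_\lambda\}$ (Lemma~\ref{AperySec3}), then shows that the index $j$ with $\ell a_\lambda=h+j\alpha+a_1$ must equal $1$ or $n-1$ (Proposition~\ref{j}) and that $\gcd(a_1,a_\lambda)=1$ (Lemma~\ref{gcd}). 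The decisive step is an elementary but non-obvious combinatorial lemma (Lemma~\ref{elementary}): the set $\{0,1,\dots,\ell\}\cup\{\ell+r,\ell+2r,\dots,\ell+(n-2)r\}$ is a complete residue system modulo $\ell+n-1$ only when $r=1$. This forces $\alpha\equiv\pm a_\lambda\pmod{a_1}$, after which the exponents $h_1,\ell,p$ of the matrix in $(2)$ are computed explicitly and the equality $I_H=\detid_2(M)$ is verified by the dimension count of Proposition~\ref{Nak}. So the paper's argument is combinatorics on the Ap\'ery set and residues modulo $a_1$, not an analysis of row-factorization matrices; your outline correctly locates the obstruction but does not supply the mechanism the paper uses to overcome it.
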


While this conjecture remains unproven in its full generality, we have verified it in several important cases, including almost symmetric semigroups \cite{goto} and semigroups with maximal embedding dimension \cite{kien}. Additionally, Branco, Cola\c{c}o, and Ojeda \cite{branco, colaco} showed that the assertions in the conjecture are always satisfied for generalized repunit numerical semigroups, although these results were obtained independently of our conjecture.

In this paper, we prove that our conjecture holds under the assumption that $k[H]/(t^{a_1})$ is a stretched Artinian local ring. This condition is automatically satisfied when $H$ has maximal embedding dimension, making our result a significant step toward the complete resolution of the conjecture. 

The structure of stretched Artinian local rings has been studied by Sally \cite{sally1} for the Gorenstein case, and later extended to the general case by Elias and Valla \cite{elias}, who provided a complete description of their defining ideals. 
However, understanding the defining ideal of $k[H]$ itself from that of $k[H]/(t^{a_1})$ presents significant challenges. 
Our main contribution is demonstrating how these challenges can be overcome when $\PF(H)$ forms an arithmetic sequence of length $n-1$.

To state our main theorem, let us fix some notation. Let $k$ be a field and $H = \left<a_1, a_2,\ldots , a_n\right>$ be a numerical semigroup, namely $\gcd(a_1,a_2,\ldots , a_n) = 1$.
We put $S=k[X_1, X_2,\ldots , X_n]$ the polynomial ring over $k$ with the grading $\deg X_i = a_i$, and consider the graded ring homomorphism $\varphi_H : S \to k[H]$ defined by $\varphi(X_i) = t^{a_1}$ for each $1 \le i \le n$. Let $I_H = \Ker \varphi_H$ be the defining ideal of $k[H]$. For a matrix $M$ whose entries are in $S$, $\detid_2(M)$ denotes the ideal of $S$ generated by $2 \times 2$ minors of $M$.

\begin{thm}\label{main theorem}
	Let $H = \left<a_1, a_2,\ldots , a_n\right>$ be an $n(\ge 3)$-generated numerical semigroup with multiplicity $a_1$, and suppose that $k[H]/(t^{a_1})$ is stretched. Then the following are equivalent.
	\begin{enumerate}
		\item $I_H = \detid_2 \begin{pmatrix} f_1 & f_2 & \cdots & f_n\\g_1 & g_2 & \cdots & g_n\end{pmatrix}$ for some homogeneous elements $f_i, g_i \in S$ with positive degrees.
		\item $I_H = \detid_2 \begin{pmatrix} X_1^{h_1} & X_2 & \cdots & X_{n-1} & X_n \\ X_2 & X_3 & \cdots & X_n^\ell & X_1^p\end{pmatrix}$ for some positive integers $h_1, \ell, p$, after suitable permutations on $a_2, a_3,\ldots, a_n$.
		\item $\PF(H) =\{h+\alpha, h+ 2\alpha, \ldots , h+(n-1)\alpha\}$ for some $h \ge 0$ and $\alpha > 0$.
	\end{enumerate}
	When this is the case, the graded minimal $S$-free resolution of $k[H]$ is given by the Eagon-Northcott complex \cite{eagon} of the matrix appearing in the conditions (1) and (2).
\end{thm}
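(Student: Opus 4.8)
The plan is to prove the cyclic chain $(2)\Rightarrow(1)\Rightarrow(3')\Rightarrow(2)$, where $(3')$ is the (a priori weaker) assertion ``$\PF(H)$ is an arithmetic progression of length $n-1$''; since $(3)\Rightarrow(3')$ is trivial and the step $(3')\Rightarrow(2)$ will along the way exhibit $\PF(H)$ exactly in the form required by $(3)$, this gives the equivalence of all four conditions and the statement about the resolution. The implication $(2)\Rightarrow(1)$ is immediate: the matrix in $(2)$ has monomial---hence homogeneous---entries $X_i^{e}$ of positive degree $e\,a_i$. For $(1)\Rightarrow(3')$ (which uses no hypothesis on $k[H]/(t^{a_1})$), observe that $\height I_H=n-1$ since $k[H]$ is a one-dimensional domain, and that $n-1$ is the generic height bound for the ideal of $2\times 2$ minors of a $2\times n$ matrix; hence $\detid_2(M)=I_H$ is a determinantal ideal of maximal height, so the Eagon--Northcott complex $\mathrm{EN}(M)$ is a free resolution of $S/\detid_2(M)=k[H]$, minimal because every entry of every differential is, up to sign, an entry of $M$ and so lies in $\m=(X_1,\dots,X_n)$. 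The last term of $\mathrm{EN}(M)$ for a $2\times n$ matrix has rank $n-1$, so the Cohen--Macaulay type of $k[H]$, which equals $|\PF(H)|$, is $n-1$; and (up to a uniform twist) its graded shifts are the shifts of $\mathrm{Sym}_{n-2}$ of a rank-two free module, hence an arithmetic progression. Via the standard identification $\PF(H)=\{\,d-\textstyle\sum_i a_i:\ S(-d)\text{ is a summand of }F_{n-1}\,\}$ (coming from $\omega_{k[H]}=\Ext^{n-1}_S(k[H],S)(-\sum_i a_i)$), it follows that $\PF(H)$ is an arithmetic progression of length $n-1$.

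The core is $(3')\Rightarrow(2)$, where stretchedness is used. Put $A:=k[H]/(t^{a_1})$, a graded Artinian local ring of length $a_1$ and embedding dimension $n-1$, with $A_j\cong k$ exactly for $j\in\Apery(H,a_1)$; ``stretched'' means its Hilbert function is $(1,n-1,1,\dots,1)$, equivalently $\m_A^2$ is principal and $\m_A^{\ell+1}=0\ne\m_A^{\ell}$ with $\ell:=a_1-n+1$. First I would pin down $\Apery(H,a_1)$. Under the standard bijection $w\mapsto w-a_1$ between the maximal elements of $\Apery(H,a_1)$ (ordered by $w\le w'\iff w'-w\in H$) and $\PF(H)$, condition $(3')$ says $\Apery(H,a_1)$ has exactly $n-1$ maximal elements forming an arithmetic progression. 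Analyzing the socle of the stretched ring $A$ shows that, after permuting $a_2,\dots,a_n$, exactly one generator $a_n$ fails to be maximal in $\Apery(H,a_1)$ (in the case $\m_A^2\ne0$), that $\m_A^k=(t^{k a_n})$ for $2\le k\le\ell$, and hence
\[
\Apery(H,a_1)=\{0\}\ \cup\ \{a_2,\dots,a_{n-1}\}\ \cup\ \{a_n,2a_n,\dots,\ell a_n\},
\]
with maximal elements $\{a_2,\dots,a_{n-1},\ell a_n\}$, the term $\ell a_n$ being the largest (the maximal-embedding-dimension case $\ell=1$ being subsumed). As these maximal elements form an arithmetic progression with $\ell a_n$ on top, relabeling $a_2<\dots<a_{n-1}$ we obtain, writing $\alpha$ for the common difference, $a_i=a_2+(i-2)\alpha$ for $2\le i\le n-1$ and $\ell a_n=a_2+(n-2)\alpha$.

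Next, from the fact that $\Apery(H,a_1)$ is a complete residue system modulo $a_1$ together with the identity $\ell a_n=a_2+(n-2)\alpha$, one deduces that $a_2-\alpha$ and $a_n+\alpha$ are positive multiples of $a_1$; set $h_1:=(a_2-\alpha)/a_1$, $p:=(a_n+\alpha)/a_1$ and
\[
M:=\begin{pmatrix} X_1^{h_1} & X_2 & \cdots & X_{n-1} & X_n \\ X_2 & X_3 & \cdots & X_n^{\ell} & X_1^{p}\end{pmatrix}.
\]
Each $2\times 2$ minor of $M$ is a binomial whose two monomials have the same value under $\varphi_H$---this reduces to a short list of linear identities in $a_1,\dots,a_n,h_1,\ell,p$ (in particular $h_1a_1=a_2-\alpha$ and $(h_1+p)a_1=a_2+a_n$), all consequences of the previous step---so $\detid_2(M)\subseteq I_H$. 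Then $\height\detid_2(M)=n-1$, squeezed between the generic bound and $\height I_H$; hence $\mathrm{EN}(M)$ resolves $S/\detid_2(M)$, and comparing its graded Euler characteristic with $\bigl(\sum_{w\in\Apery(H,a_1)}t^{w}\bigr)\prod_{i=2}^{n}(1-t^{a_i})$ (the numerator of the Hilbert series of $k[H]$ over the common denominator $\prod_{i=1}^{n}(1-t^{a_i})$) shows that $S/\detid_2(M)$ and $k[H]$ have the same Hilbert series; a surjection of finitely generated graded $S$-modules with the same Hilbert function is an isomorphism, so $\detid_2(M)=I_H$. Finally $h_1\ge 1$ (else $M$ has a unit entry, $\detid_2(M)$ is a complete intersection, $k[H]$ is Gorenstein, and $|\PF(H)|=1$, contradicting $(3')$), so $h_1,\ell,p>0$ and $(2)$ holds; moreover $a_2-a_1=\min\PF(H)$, so $\PF(H)=\{h+\alpha,\dots,h+(n-1)\alpha\}$ with $h=(h_1-1)a_1\ge 0$, which is $(3)$. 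The resolution statement is then immediate, $\mathrm{EN}(M)$ having been shown to be the minimal graded $S$-free resolution of $k[H]$ whenever $I_H=\detid_2(M)$ with $M$ as in $(1)$ or $(2)$.

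The main obstacle I anticipate is the extraction carried out in the last two paragraphs: forcing, out of the interaction between stretchedness of $k[H]/(t^{a_1})$ and the arithmetic-progression condition on $\PF(H)$, both the precise shape of $\Apery(H,a_1)$ and then the divisibilities $a_1\mid a_2-\alpha$ and $a_1\mid a_n+\alpha$ that fix the exponents of $M$. Concretely this is the problem of lifting the Elias--Valla description of the defining ideal of the stretched Artinian reduction $A=k[H]/(t^{a_1})$ back to a description of $I_H\subseteq S$---a lift that, as noted in the introduction, is not controlled without the arithmetic hypothesis. The rigidity argument---that a determinantal ideal of maximal height has the explicit Eagon--Northcott resolution, so Hilbert series can be matched combinatorially---is then what promotes $\detid_2(M)\subseteq I_H$ to equality.
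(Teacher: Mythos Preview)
Your overall architecture matches the paper's: $(2)\Rightarrow(1)$ is trivial, $(1)\Rightarrow(3')$ via the Eagon--Northcott complex is exactly what the paper invokes from \cite{kien}, and the work lies in $(3')\Rightarrow(2)$. Your identification of $\Apery(H,a_1)=\{0,a_2,\dots,a_{n-1}\}\cup\{a_n,2a_n,\dots,\ell a_n\}$ (after renaming the unique non-maximal generator $a_n$) is correct and is the paper's Lemma~\ref{AperySec3}. But there is a genuine error in the next sentence.

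You assert that among the maximal Ap\'ery elements $\{a_2,\dots,a_{n-1},\ell a_n\}$ the element $\ell a_n$ is the \emph{largest}. This is false in general: $t^{\ell a_n}$ generates the top power $\fkm_A^{\ell}$ of the maximal ideal of $A=k[H]/(t^{a_1})$, but that is ``largest'' for the $\fkm_A$-adic filtration, not for the $H$-grading, and the two orderings are unrelated. In the paper's Example~5.2, $H=\langle 6,13,40,41\rangle$, the non-maximal generator is $13$, and $\ell a_n=3\cdot 13=39$ is the \emph{smallest} maximal Ap\'ery element. What is true---and requires proof---is that $\ell a_n$ sits at one \emph{end} of the arithmetic progression, never in the middle. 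This is Proposition~\ref{j}(2) in the paper, and its proof is not a socle computation: one uses that the binomial $X_\lambda^{2\ell}-X_{j-1}X_{j+1}$ (coming from $2\ell a_\lambda=a_{j-1}+a_{j+1}$ when $\ell a_\lambda$ is the $j$-th term with $2\le j\le n-2$) would force a minimal generator of $I_H+(X_1)$ of total degree $2\ell>\ell+1$, contradicting $\fkm_A^{\ell+1}=0$. The paper then handles the two cases ($\ell a_\lambda$ smallest vs.\ largest) separately in Theorems~\ref{thm:j=1} and~\ref{thm:j=n-1}; they yield matrices that are equivalent under column reversal, row swap, and relabelling $a_i\leftrightarrow a_{n+2-i}$, so after the case split one does land in the single form~(2).

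Second, the step you flag yourself---``one deduces that $a_2-\alpha$ and $a_n+\alpha$ are positive multiples of $a_1$''---is the crux and is not a routine deduction. The paper first proves $\gcd(a_1,a_n)=1$ (Lemma~\ref{gcd}, a nontrivial counting argument), and then uses the purely combinatorial Lemma~\ref{elementary}: if $\{0,1,\dots,\ell\}\cup\{\ell+r,\ell+2r,\dots,\ell+(n-2)r\}$ is a complete residue system modulo $a_1=\ell+n-1$ with $1\le r\le n-2$, then $r=1$. This forces $\alpha\equiv \pm a_n\pmod{a_1}$ (the sign depending on which end $\ell a_n$ occupies), and from there the divisibilities and the exponents $h_1,p$ drop out. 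Your outline does not contain this mechanism.

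Two smaller points. Your ``height squeezed between the generic bound and $\height I_H$'' does not give $\height\detid_2(M)=n-1$: both of those are upper bounds. The paper (and you, implicitly, once you compute $\dim_k S/(\detid_2(M)+(X_1))=a_1$) gets the lower bound from the Artinian reduction modulo $X_1$, after which Proposition~\ref{Nak} (graded Nakayama) gives $\detid_2(M)=I_H$ directly, with no Hilbert-series comparison needed. Your Hilbert-series route would work too, but it is more laborious and still presupposes the height.
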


This connection between numerical properties (pseudo-Frobenius numbers) and algebraic structures (defining ideals) is particularly interesting. 
It is well-known that $\PF(H)$ corresponds to the degrees of generators of the graded canonical module of $k[H]$ (see \cite{goto2}). This fact suggests that $\PF(H)$ contains information on the ring structure of $k[H]$. 
On the other hand, it is clear that the ring structure is more complicated than the numerical information in $\PF(H)$ can express. 
In fact, the implication from (1) to (3) in Theorem \ref{main theorem} can be proven for arbitrary numerical semigroup $H$, by considering the grading of the Eagon-Northcott complex (\cite{eagon}). 
See \cite[Section 2.1]{kien} for the details.
Hence, if it is proven that specific patterns in $\PF(H)$, such as forming an arithmetic sequence, can completely determine the structure of defining ideals, then it should be marked a significant step forward. 
Our result highlights how combinatorial invariants of numerical semigroups can directly inform the algebraic structure of their associated semigroup rings, assuming the stretchedness in $k[H]/(t^{a_1})$, providing a unified perspective that bridges these two areas.

The organization of this paper is as follows. In Section 2, we review basic definitions and properties of numerical semigroups, their pseudo-Frobenius numbers, and stretched Artinian local rings. Section 3 contains our main technical results, where we establish the connection between the arithmetic sequence property of PF(H) and the structure of defining ideals under the stretched condition. 
In Section 4, we also discuss the Cohen-Macaulay property of the associated graded ring $\gr(k[H]) = \bigoplus_{\ell \ge 0} \fkm^\ell / \fkm^{\ell+1}$ of the graded maximal ideal $\fkm$ in $k[H]$.
Section 5 consists of several examples illustrating our main theorem and discussing potential generalizations.


\section{Preliminaries}

	In this section, the notation is defined as follows.
	Let $H$ be a numerical semigroup minimally generated by $a_1, a_2,\ldots , a_n$.
	We always assume $a_1 = \min(H \setminus\{0\})$, which is called the multiplicity of $H$.
	Let $k$ be a field, $S=k[X_1, X_2,\ldots , X_n]$ and $V=k[t]$ be the polynomial rings over $k$.
	The numerical semigroup ring of $H$, denoted by $k[H]$, is defined as  
	$$
	k[H] = k[t^{a_1}, t^{a_2}, \ldots , t^{a_n}] \subseteq k[t].
	$$

\subsection{Fundamental notation in numerical semigroups}

	The Ap\'ery set and the pseudo-Frobenius numbers are essential tools in the study of numerical semigroups.
	First, let us recall the definition of them.
	
	\begin{defn}
		For $0 < h \in H$, we put
		$$
		\Apery(H, h) = \{a \in H \mid a-h \notin H\}.
		$$
		We put $w_i = \min \{a \in H \mid a \equiv i \pmod h\}$ for each $0 \le i \le h-1$. Then, obviously we have $\Apery(H,h) = \{w_0 = 0, w_1, \ldots ,w_{h-1}\}$.
	\end{defn}

	\begin{defn}
		An integer $\alpha \in \mathbb{Z} \setminus H$ is called a pseudo-Frobenius number of $H$, if $\alpha + h \in H$ for all $0 < h \in H$. The set of all pseudo-Frobenius numbers of $H$ is denoted by $\PF(H)$.
	\end{defn}

	The following two lemmas are fundamental and used frequently.

	\begin{lem}\cite[Proposition 2.19 (2)]{rosales1}\label{PFlem1}
		For an integer $z \in \mathbb{Z}$, $z \notin H$ if and only if there exists $\alpha \in \PF(H)$ such that $\alpha - z \in H$.
	\end{lem}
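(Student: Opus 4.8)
The plan is to prove the two implications separately, with the reverse direction being essentially immediate and the forward direction resting on the cofiniteness of $H$.

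For the ``if'' direction, suppose there is some $\alpha \in \PF(H)$ with $\alpha - z \in H$. I would argue by contradiction: if $z$ were in $H$, then writing $\alpha = z + (\alpha - z)$ as a sum of two elements of $H$ and invoking closure of $H$ under addition would force $\alpha \in H$, contradicting $\alpha \in \PF(H) \subseteq \Z \setminus H$. Hence $z \notin H$. This step uses nothing beyond the semigroup property.

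For the ``only if'' direction, assume $z \notin H$. The key idea is to consider the set $(z + H) \setminus H$, where $z + H = \{z + h \mid h \in H\}$. This set is nonempty, since $z = z + 0 \in z + H$ while $z \notin H$ by assumption; and it is a subset of $\Z \setminus H$, which is bounded above because $\gcd(a_1, \ldots, a_n) = 1$ guarantees a finite Frobenius number $\max(\Z \setminus H)$. Therefore I can choose $\alpha$ to be the largest element of $(z + H) \setminus H$. By construction $\alpha \notin H$ and $\alpha - z \in H$, so it remains only to verify $\alpha \in \PF(H)$, i.e. that $\alpha + h \in H$ for every $0 < h \in H$.

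This last verification is the heart of the argument, and the step I expect to require the most care. I would again argue by contradiction: suppose $\alpha + h \notin H$ for some $0 < h \in H$. Writing $\alpha = z + h'$ with $h' \in H$, I get $\alpha + h = z + (h' + h) \in z + H$; combined with $\alpha + h \notin H$, this places $\alpha + h$ in $(z + H) \setminus H$. But $\alpha + h > \alpha$ since $h > 0$, contradicting the maximality of $\alpha$. Hence $\alpha + h \in H$ for all positive $h \in H$, so $\alpha \in \PF(H)$, completing the proof. The genuine subtlety is choosing the maximum within the single translate $z + H$ rather than in all of $\Z \setminus H$: it is precisely this choice that makes the maximality obstruction propagate into the pseudo-Frobenius condition $\alpha + h \in H$.
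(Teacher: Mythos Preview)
Your argument is correct. Note, however, that the paper does not supply its own proof of this lemma: it simply quotes the result from \cite[Proposition 2.19 (2)]{rosales1} and moves on, so there is nothing in the paper to compare your proof against. Your proof is the standard one and matches what one finds in the cited reference.
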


	\begin{lem}\cite[Proposition 2.20]{rosales1}\label{PFlem2}
		For each $0 < h \in H$, we have
		$$
		\PF(H) = \{a - h \mid \text{$a \in \Apery(H,h)$ and $b-a \notin H$ for all $b \in \Apery(H,h) \setminus \{a\}$}\}
		$$
	\end{lem}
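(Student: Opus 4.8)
The plan is to reinterpret the right-hand side through the natural partial order on $H$ defined by $a \preceq b \iff b - a \in H$, under which the condition ``$b - a \notin H$ for all $b \in \Apery(H,h) \setminus \{a\}$'' simply says that $a$ is a \emph{maximal} element of $\Apery(H,h)$. Writing $E_h$ for the set of maximal elements of $\Apery(H,h)$ with respect to $\preceq$, the assertion becomes $\PF(H) = \{a - h \mid a \in E_h\}$, and I would prove the two inclusions separately, using throughout the structural description $\Apery(H,h) = \{w_0, w_1, \ldots, w_{h-1}\}$ recorded in the Definition, namely that each $w_i$ is the minimal element of $H$ in its residue class modulo $h$.

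For the inclusion $\subseteq$, given $\alpha \in \PF(H)$ I would set $a = \alpha + h$. Since $h$ is a positive element of $H$, the defining property of $\PF(H)$ yields $a \in H$, while $a - h = \alpha \notin H$; hence $a \in \Apery(H,h)$. To check maximality, suppose $b \in \Apery(H,h)$ with $b - a = c \in H$ and $c > 0$. Then $b - h = \alpha + c$, and since $\alpha \in \PF(H)$ and $0 < c \in H$ this forces $b - h \in H$, contradicting $b \in \Apery(H,h)$. Thus $c = 0$, so $a$ is maximal and $\alpha = a - h$ belongs to the right-hand side.

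For the reverse inclusion, given a maximal $a \in \Apery(H,h)$ I would set $\alpha = a - h$; this is not in $H$ by the definition of $\Apery(H,h)$, so it remains to verify $\alpha + c \in H$ for every $0 < c \in H$. Here I would invoke the residue-class structure: since $a + c \in H$, writing $a + c \equiv i \pmod h$ gives $a + c = w_i + kh$ for some integer $k \ge 0$ by minimality of $w_i$. If $k \ge 1$, then $\alpha + c = (a + c) - h = w_i + (k-1)h \in H$ and we are done. The delicate case is $k = 0$, where $a + c = w_i \in \Apery(H,h)$ and $w_i - a = c \in H$ with $c > 0$; but this would exhibit an Apéry element strictly above $a$, contradicting maximality, so this case cannot occur. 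Recognizing that the maximality of $a$ is precisely what prevents $a + c$ from landing back in $\Apery(H,h)$ is the main (though mild) obstacle in the argument; the remaining steps are a direct unwinding of the definitions.
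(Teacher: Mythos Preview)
Your proof is correct and is essentially the standard argument for this fact. Note, however, that the paper does not actually supply a proof of this lemma: it is stated with a citation to \cite[Proposition~2.20]{rosales1} and used as a black box, so there is no in-paper proof to compare against. Your argument---reinterpreting the condition on the right-hand side as maximality in $\Apery(H,h)$ for the order $a \preceq b \Leftrightarrow b-a \in H$, then checking both inclusions via the residue-class description $\Apery(H,h)=\{w_0,\ldots,w_{h-1}\}$---is exactly the proof one finds in the cited reference.
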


	Then, we have the following. Although the proof can be found in \cite{kien}, let us state a brief proof for the sake of completeness.

	\begin{lem}\cite[Lemma 2]{kien}
		Suppose that $n \ge 3$.
    	If there are integers $h \ge 0$ and $\alpha > 0$ such that $\PF(H) = \{h+\alpha, h+2\alpha, \ldots , h +(n-1)\alpha\}$, then $h \in H$ and $\alpha \notin H$.
	\end{lem}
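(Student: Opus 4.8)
We want to show that if $\PF(H) = \{h+\alpha, h+2\alpha, \ldots, h+(n-1)\alpha\}$ with $h \ge 0$, $\alpha > 0$, and $n \ge 3$, then $h \in H$ and $\alpha \notin H$. The argument splits naturally into these two conclusions, and both will be proven by exploiting the structural constraints that $\PF(H)$ imposes via Lemma~\ref{PFlem1}.

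\emph{Step 1: $\alpha \notin H$.} Suppose for contradiction that $\alpha \in H$; note $\alpha > 0$ so $\alpha$ is a positive element of $H$. The defining property of a pseudo-Frobenius number says that for any $\beta \in \PF(H)$ and any $0 < a \in H$ we have $\beta + a \in H$. Take $\beta = h + (n-1)\alpha$, the Frobenius number (the largest element of $\mathbb{Z}\setminus H$), and $a = \alpha$. Then $\beta + \alpha = h + n\alpha$ would have to lie in $H$ — which is fine, no contradiction yet. Instead, I will run the implication the other way: since $h + (n-1)\alpha \notin H$ and $\alpha \in H$ with $\alpha > 0$, subtracting $\alpha$ must keep us outside $H$ in a controlled way. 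More precisely, if $\alpha \in H$ then $h + i\alpha \in H$ would follow from $h + (i+1)\alpha \in H$ for all $i$; contrapositively, since $h+\alpha \notin H$, adding multiples of $\alpha \in H$ to it keeps... no. Let me instead use: since $\alpha \in H$, for every pseudo-Frobenius number $\beta$ we get $\beta + \alpha \in H$. Applying this to each $\beta_i := h + i\alpha$ gives $h + (i+1)\alpha \in H$ for $i = 1, \ldots, n-1$; in particular $h + 2\alpha \in H$. But $h + 2\alpha \in \PF(H) \subseteq \mathbb{Z}\setminus H$ (using $n \ge 3$, so that $h+2\alpha$ is indeed listed as a pseudo-Frobenius number). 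This is a contradiction, so $\alpha \notin H$.

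\emph{Step 2: $h \in H$.} Suppose $h \notin H$ (note $h \ge 0$; the case $h = 0$ gives $h \in H$ trivially, so assume $h \ge 1$). By Lemma~\ref{PFlem1}, there is some $\beta \in \PF(H)$ with $\beta - h \in H$, say $\beta = h + j\alpha$ for some $1 \le j \le n-1$, so $j\alpha = \beta - h \in H$. Since $\beta - h = j\alpha \in H$ and $\alpha > 0$ (so $j\alpha > 0$), we may add $j\alpha$ to any pseudo-Frobenius number: $\beta_i + j\alpha = h + (i+j)\alpha \in H$ for all $i = 1, \ldots, n-1$. Taking $i$ so that $1 \le i + j \le n-1$ — which is possible as long as $j \le n-2$, e.g. $i = 1$ works when $j \le n-2$ — yields $h + (i+j)\alpha \in \PF(H) \subseteq \mathbb{Z}\setminus H$, a contradiction. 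It remains to handle $j = n-1$: then $(n-1)\alpha \in H$, and since $n \ge 3$ we have $n - 1 \ge 2$, so $2\alpha$ divides... no: we get $(n-1)\alpha \in H$, hence $h + \alpha + (n-1)\alpha = h + n\alpha \in H$ is automatic, and more usefully $\beta_1 + (n-1)\alpha = h + n\alpha \in H$, which gives nothing new. Instead, from $(n-1)\alpha \in H$ with $n-1 \ge 2$, note that Step~1 showed $\alpha \notin H$; if additionally $2\alpha \notin H$, then by Lemma~\ref{PFlem1} applied to $2\alpha$ there is $\beta' \in \PF(H)$ with $\beta' - 2\alpha \in H$, i.e. $(j'-2)\alpha \in H$ for some $1 \le j' \le n-1$ with the relation $h + j'\alpha - 2\alpha = h + (j'-2)\alpha \in H$; pushing this through as before and tracking the index arithmetic modulo the finite list produces a contradiction. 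The cleanest way to organize Step~2 is: pick the \emph{smallest} $j \in \{1,\ldots,n-1\}$ with $h + j\alpha - h = j\alpha \in H$ (which exists by Lemma~\ref{PFlem1} since $h \notin H$), then add $j\alpha$ to $\beta_1 = h+\alpha$ to land in $H$ at index $1+j$; if $1 + j \le n-1$ we contradict $\PF(H) \subseteq \mathbb{Z}\setminus H$ directly, and if $1 + j \ge n$ (so $j = n-1$) we instead add $j\alpha$ to $\beta_{n-1} = h + (n-1)\alpha \notin H$... which needn't help, so for $j = n-1$ I would argue separately that $(n-1)\alpha \in H$ combined with $n \ge 3$ forces a smaller multiple of $\alpha$ into $H$ via Lemma~\ref{PFlem1}, contradicting minimality of $j$.

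\textbf{The main obstacle.} The genuine difficulty is the boundary case in Step~2 where the only index $j$ with $j\alpha \in H$ is $j = n-1$: then adding $j\alpha$ to a pseudo-Frobenius number always overshoots the list $\{h+\alpha,\ldots,h+(n-1)\alpha\}$, so the immediate contradiction evaporates and one must instead descend to a smaller multiple of $\alpha$. I expect the right move is to apply Lemma~\ref{PFlem1} to the elements $\alpha, 2\alpha, \ldots, (n-2)\alpha$ in turn — each is $\notin H$ by the minimality assumption — extract for each a pseudo-Frobenius number $h + j_i\alpha$ with $(j_i - i)\alpha \in H$, and derive a contradiction from the resulting system of membership relations using that the index set has only $n-1$ elements and $\alpha \notin H$. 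Once that combinatorial bookkeeping is set up carefully, the rest of the proof is routine.
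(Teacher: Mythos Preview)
Your Step~1 is correct and is exactly the paper's argument: if $\alpha \in H$ then $(h+\alpha)+\alpha \in H$ by the defining property of $\PF(H)$, yet $h+2\alpha \in \PF(H)$ since $n\ge 3$, contradiction.

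Your Step~2, however, has a genuine gap that you yourself identify: the case $j=n-1$ is not resolved, and the combinatorial bookkeeping you sketch at the end is never carried out. More importantly, you are making this step far harder than it needs to be by applying Lemma~\ref{PFlem1} to $h$. The paper's move is to apply Lemma~\ref{PFlem1} to $\alpha$ instead---which you already know is not in $H$ from Step~1. This gives some $\beta = h+k\alpha \in \PF(H)$ with $\beta-\alpha = h+(k-1)\alpha \in H$. If $k\ge 2$ then $h+(k-1)\alpha$ lies in the list $\PF(H)\subseteq \mathbb{Z}\setminus H$, a contradiction; so $k=1$ and $h\in H$ directly. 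Two lines, no boundary case. Ironically, you gesture toward exactly this in your ``main obstacle'' paragraph (applying Lemma~\ref{PFlem1} to $\alpha$), but you frame it as one step in a larger inductive scheme rather than recognizing that the single application to $\alpha$ already finishes the proof.
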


	\begin{proof}
		First, suppose that $\alpha \in H$. Then $(h+\alpha) + \alpha \in H$, because $\alpha > 0$ and $h + \alpha \in \PF(H)$. However, since $n \ge 3$, $h + 2\alpha \notin H$, which is a contradiction. Hence $\alpha \notin H$. 
		Then, thanks to Lemma \ref{PFlem1}, there exists $1 \le k \le n-1$ such that $(h + k\alpha) - \alpha \in H$. This implies that $k = 1$. Hence $h \in H$.
	\end{proof}

\subsection{Stretched local rings and stretched numerical semigroup rings}

	The notion of stretched local rings was studied by several researchers, such as J. Sally, J. Elias, and G. Valla \cite{elias,sally1}. 
    We begin by recalling the definition of stretched Artinian local rings.
    
    \begin{defn}[\cite{sally1}]
    	Let $A$ be an Artinian local ring with maximal ideal $\fkm$.
    	We say that $A$ is stretched, if $\fkm^2 = (0)$ or $\fkm^2$ is principal.
    \end{defn}

    Let us review their structure theorem for stretched Artinian local rings, which is not directly used in this paper.

	\begin{thm}[{\cite[Corollary 1.2]{sally1} for Gorenstein case, \cite[Theorem 3.1]{elias} for general case}]
		Let $R$ be an $n$-dimensional regular local ring with maximal ideal $\fkn$. 
		Let $I$ be an $\fkn$-primary ideal of $R$ satisfying $I \subseteq \fkn^2$. 
		Put $A=R/I$ and $\fkm = \fkn/I$.
		Let $\ell$ denote the socle degree of $A$, that is, $\ell$ is the largest integer such that $\fkm^\ell \ne (0)$.
		Suppose that $A$ is stretched. Then we have the following.
		\begin{enumerate}
			\item If the Cohen-Macaulay type $\CMtype(A)$ is less than $n = \dim R$, then there exists a system $X_1, X_2, \ldots , X_n$ of generators of $\fkn$ such that $I$ is minimally generated by 
			$$
			\{X_iX_j \mid 1 \le i < j \le n\} \cup \{X_j^2\mid 2\le j \le \CMtype(A)\} \cup \{X_i^2 - u_i X_1^\ell\ \mid \CMtype(A) + 1 \le i \le n\}
			$$
			where $u_i$ are units in $R$.
			\item If $\CMtype(A) = n$, then there exists a system $X_1, X_2,\ldots , X_n$ of generators of $\fkn$ such that $I$ is minimally generated by
			$$
				\{X_1 X_j \mid 2\le j \le n\} \cup \{X_i X_j\mid 2\le i \le j \le n\} \cup \{X_1^{\ell+1}\}.
			$$
		\end{enumerate}
	\end{thm}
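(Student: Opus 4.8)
The plan is to recover the whole presentation from the Hilbert function of $A$ together with one carefully chosen minimal generating system of $\fkn$. Because $I \subseteq \fkn^2$, the images of $X_1, \dots, X_n$ minimally generate $\fkm$, so $\dim_k \fkm/\fkm^2 = n$, while stretchedness gives $\dim_k \fkm^2/\fkm^3 \le 1$. Viewing $\gr_{\fkm}(A)$ as a standard graded $k$-algebra and applying Macaulay's bound on Hilbert functions inductively, one obtains $\dim_k \fkm^i/\fkm^{i+1} = 1$ for all $2 \le i \le \ell$ and $= 0$ for $i > \ell$; in particular $\dim_k A = n + \ell$. When $\ell = 1$ we have $\fkm^2 = 0$, $I = \fkn^2$ and $\CMtype(A) = n$, the degenerate instance of case (2), so I assume $\ell \ge 2$ from now on.

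\textbf{A stretched direction and normalization.} Multiplication induces a nonzero symmetric bilinear form $\beta \colon \fkm/\fkm^2 \times \fkm/\fkm^2 \to \fkm^2/\fkm^3 \cong k$. I would first choose $X_1$ with $\beta(X_1, X_1)$ a unit; then $X_1^2$ generates $\fkm^2$, and since each $\fkm^i$ $(2 \le i \le \ell)$ is one-dimensional modulo $\fkm^{i+1}$, it follows that $X_1^i$ generates $\fkm^i$, with $X_1^\ell \ne 0 = X_1^{\ell+1}$. Extend $X_1$ to a minimal generating set $X_1, \dots, X_n$ of $\fkn$. Every product $X_i X_j$ then lies in $\fkm^2 = (X_1^2)$, and its class modulo $\fkm^3$ equals $\beta(X_i, X_j)\,\overline{X_1^2}$. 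A Gram--Schmidt change of $X_2, \dots, X_n$ (keeping $X_1$) diagonalizes $\beta$, making $\beta(X_1, X_j) = 0$ and $\beta(X_i, X_j) = 0$ for $i \ne j$. Finally I would remove the remaining higher-order tails inductively: since each $\fkm^d = (X_1^d)$ is principal, any $X_iX_j$ $(i \ne j)$ or $X_i^2 - u_iX_1^\ell$ not yet zero lies in some $\fkm^d$ and can be cancelled by correcting $X_i$ with a multiple of a power of $X_1$. The outcome is that in $A$ one has $X_1X_j = 0$ and $X_iX_j = 0$ for $i \ne j$, while each square is either $X_i^2 = 0$ or $X_i^2 = u_iX_1^\ell$ with $u_i$ a unit.

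\textbf{Type and minimality.} Reindexing so that $X_i^2 = 0$ for $2 \le i \le r$ and $X_i^2 = u_iX_1^\ell$ for $r+1 \le i \le n$, a direct computation of $\Soc(A) = (0 :_A \fkm)$ from these relations shows it is spanned by $X_1^\ell$ together with $X_2, \dots, X_r$, whence $r = \CMtype(A)$. If $r < n$, then for some $i > r$ the identity $u_iX_1^{\ell+1} = (X_1X_i)X_i - X_1(X_i^2 - u_iX_1^\ell)$, combined with $X_1X_i \in I$, shows that $X_1^{\ell+1}$ already lies in the ideal generated by the listed relations; this is presentation (1). If $r = n$, no square reaches the socle tail $(X_1^\ell)$, so $X_1^{\ell+1}$ must be imposed as a separate generator, giving presentation (2). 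In both cases one must finally verify minimality, which I would do by checking that the number of listed relations equals $\dim_k I/\fkn I$, a quantity read off from the Hilbert functions of $R$ and of $A$.

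\textbf{The main obstacle.} The crux is the normalization step. The inductive coordinate change that simultaneously diagonalizes $\beta$ and clears all higher tails---so that the relations become exactly monomial (resp.\ binomial)---is where the real bookkeeping lies. More seriously, the very existence of $X_1$ with $\beta(X_1, X_1)$ a unit can fail when $\operatorname{char} k = 2$, since then $\beta$ may be alternating; for instance in $k[X,Y]/(X^2, Y^2)$ no element of $\fkm$ has nonzero square, so presentation (1) is unattainable there. Thus the argument as organized above needs $\operatorname{char} k \ne 2$ (or an argument working directly with the bilinear form $\beta$ rather than with squares of generators). Proving minimality in the type-and-minimality step, as opposed to mere generation, is the other place requiring genuine care.
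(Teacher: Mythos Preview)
This theorem is quoted from Sally and Elias--Valla; the paper gives no proof of it (it is introduced as a structure theorem ``which is not directly used in this paper''). So there is nothing in the paper to compare your argument against, and I comment only on the argument itself.

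Your outline is the standard one and is largely sound, but the normalization step contains a real gap. Correcting $X_j$ by multiples of powers of $X_1$ does succeed in making $X_1X_j=0$ for $j\ge 2$, and once that holds one checks that $X_iX_j\in (0:_A X_1)\cap\fkm^2=\fkm^\ell$ for all $i,j\ge 2$. But those same corrections do \emph{nothing} to the cross-terms $X_iX_j$ with $i,j\ge 2$: if $X_1X_j=0$ then $(X_i+cX_1^{d})X_j=X_iX_j$ for every $c$ and every $d\ge 1$. Thus after your procedure one is left with $X_iX_j=c_{ij}X_1^\ell$ for a symmetric array $(c_{ij})_{i,j\ge 2}$ that need not be diagonal. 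What is actually needed is a \emph{second} diagonalization, now of the $k$-valued form $(\overline{X_i},\overline{X_j})\mapsto c_{ij}$ on the span of $\overline{X_2},\dots,\overline{X_n}$, via a linear change among $X_2,\dots,X_n$ alone; this preserves $X_1X_j=0$ and brings the relations to $X_iX_j=0$ for $i\ne j$ and $X_i^2=u_iX_1^\ell$. It costs nothing extra in characteristic $\ne 2$, but it is not the move you described.

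Your characteristic-$2$ caveat is correct and points to a genuine restriction on the statement. In $k[X,Y]/(X^2,Y^2)$ with $\operatorname{char}k=2$ one has $\ell=2$ and $\CMtype(A)=1<2=n$, yet every linear change of generators leaves all squares equal to zero, so no presentation of the form in (1) exists. The cited sources carry hypotheses on the residue field that the present paper has suppressed.
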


	We consider the condition that $k[H]/(t^{a_1})$ is a stretched Artinian local ring. Here, notice that, although $k[H]$ is not a local ring, $k[H]/(t^{a_1})$ is a local ring.
	
	\begin{rem}
		In \cite{sally1}, Sally defined the stretched property for arbitrary Cohen-Macaulay local rings as follows:
		\begin{quote}
			A Cohen-Macaulay local ring $A$ with maximal ideal $\fkm$ is said to be a stretched local ring, if there exists a parameter ideal $Q$ for $A$ such that $Q$ is a reduction of $\fkm$ and $A/Q$ is a stretched Artinian local ring.
		\end{quote}
		Henceforth, we can consider the condition that the local ring $k[[H]]$ is stretched, this is equivalent to saying that there exists $f = t^{a_1} + \text{(higher terms)}$ such that $k[[H]]/(f)$ is a stretched Artinian local ring. However, this is not equivalent to the condition that $k[[H]]/(t^{a_1}) (\cong k[H]/(t^{a_1}))$ is stretched.
		For example, $H=\left<6,7,11,15\right>$ satisfies that $k[[H]]$ is stretched but $k[[H]]/(t^6)$ is not stretched \cite{eto2}.

		Because the case where $k[[H]]$ is stretched but $k[[H]]/(t^{a_1})$ is not stretched is much more complicated (cf. \cite{eto2}), we concentrate on the case where $k[[H]]/(t^{a_1}) \cong k[H]/(t^{a_1})$ is stretched in this paper.
	\end{rem}

	The following lemma is the numerical criterion for stretchedness of $k[H]/(t^{a_1})$ in terms of the Ap\'{e}ry set $\Apery(H,a_1)$.


	\begin{lem}\label{Apery}
		Let $H = \left<a_1, a_2,\ldots , a_n\right>$ be a numerical semigroup with embedding dimension $n$.
		We put $\ell = a_1 - n + 1$.
		Then $k[H]/(t^{a_1})$ is stretched if and only if
		$$
		\Apery(H,a_1) = \begin{cases}
			\{0, a_2, a_3, \ldots , a_n\} & (\ell = 1)\\
			\{0, a_2, a_3, \ldots , a_n\} \cup \{a_\lambda + a_\mu\} & (\ell = 2)\\
			\{0, a_2, a_3, \ldots , a_n\} \cup \{2a_\lambda, 3a_\lambda, \ldots , \ell a_\lambda\} & (\ell \ge 3)
 		\end{cases}
		$$
		for some $1 \le \lambda \le \mu \le n$.
	\end{lem}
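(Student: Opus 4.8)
The plan is to translate stretchedness of $A := k[H]/(t^{a_1})$ into a statement about the Hilbert function and socle structure of $A$, and then read off the shape of $\Apery(H,a_1)$ from the correspondence between $A$ and the semigroup. First I would recall that, as a $k$-vector space, $A = k[H]/(t^{a_1})$ has basis $\{t^w \mid w \in \Apery(H,a_1)\}$, since $\Apery(H,a_1)$ is exactly a set of representatives of $\mathbb{Z}/a_1\mathbb{Z}$ realized by the smallest semigroup elements in each class, and multiplication by $t^{a_1}$ kills $t^w$ precisely when $w + a_1$ is \emph{not} the minimal representative — i.e.\ when we pass out of the Apéry set. In particular $\dim_k A = |\Apery(H,a_1)| = a_1$, and $\ell := a_1 - n + 1$ is chosen so that the embedding dimension of $A$ is $n-1$ (the images of $t^{a_2},\dots,t^{a_n}$ minimally generate $\fkm_A$, because $a_2,\dots,a_n$ are the minimal nonzero generators other than $a_1$). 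The key numeric identity is therefore $\dim_k(\fkm_A/\fkm_A^2) = n-1$ and $\dim_k A = a_1$, so that $\dim_k \fkm_A^2 = a_1 - 1 - (n-1) = \ell - 1$.

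Next I would use the definition of stretched: $A$ is stretched iff $\fkm_A^2 = 0$ or $\fkm_A^2$ is principal. Since $\fkm_A^j \supseteq \fkm_A^{j+1}$ with each quotient having dimension at most $1$ once $\fkm_A^2$ is principal, stretchedness forces the Hilbert function of $A$ to be $(1, n-1, 1, 1, \dots, 1, 0, \dots)$ with exactly $\ell - 1$ trailing ones, i.e.\ the socle degree is $\ell$ (when $\ell \ge 2$); when $\ell = 1$ we get $\fkm_A^2 = 0$. So the heart of the argument is: stretchedness $\iff$ $\dim_k \fkm_A^j \le 1$ for all $j \ge 2$, which (given $\dim_k \fkm_A^2 = \ell - 1$ is already forced) amounts to saying the powers $\fkm_A^2 \supsetneq \fkm_A^3 \supsetneq \cdots$ drop by exactly one each step. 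Translating $\fkm_A^j = \bigoplus_{w} k t^w$ over those $w \in \Apery(H,a_1)$ expressible as a sum of at least $j$ elements from $\{a_2,\dots,a_n\}$ (modulo the Apéry normalization), the condition ``$\fkm_A^j/\fkm_A^{j+1}$ is at most $1$-dimensional for $j \ge 2$'' says that for each order $j \ge 2$ there is at most one new Apéry element, and principality of $\fkm_A^2$ forces each such new element to be the previous one plus a fixed generator. This is exactly what pins down the three cases: for $\ell = 1$ there are no Apéry elements beyond $0, a_2,\dots,a_n$; for $\ell = 2$ there is exactly one extra element, necessarily of the form $a_\lambda + a_\mu$; for $\ell \ge 3$ the chain $\fkm_A^2 \supsetneq \cdots \supsetneq \fkm_A^\ell \supsetneq 0$ gives $\ell - 1$ extra elements $x_2, x_3, \dots, x_\ell$ with $x_{j+1} = x_j + (\text{the generator of } \fkm_A^2)$; writing $x_2 = 2a_\lambda$ forces $x_j = j a_\lambda$.

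For the converse, I would simply observe that each displayed description of $\Apery(H,a_1)$ directly reconstructs the Hilbert function of $A$: in the $\ell = 1$ case $\fkm_A^2 = 0$; in the $\ell = 2$ case $\fkm_A^2$ is spanned by $t^{a_\lambda + a_\mu}$, hence principal; in the $\ell \ge 3$ case $\fkm_A^2 = (t^{2a_\lambda})$ with $\fkm_A^{j} = (t^{j a_\lambda})$ for $2 \le j \le \ell$, again principal — so $A$ is stretched. The main obstacle I anticipate is the forward direction's bookkeeping: one must argue carefully that the ``new'' Apéry elements appearing in successive powers $\fkm_A^j$ are forced to be honest sums (not merely congruent to sums) of the generators, using the minimality built into the Apéry set, and that principality of $\fkm_A^2$ propagates multiplicatively up the chain $\fkm_A^2, \fkm_A^3, \dots$; handling the small cases $\ell = 1, 2$ separately (where $\fkm_A^2$ principal allows a product of two possibly-distinct generators) versus $\ell \ge 3$ (where the repeated structure forces a single generator $a_\lambda$) is where the case split in the statement comes from, and getting the $\ell = 2$ boundary case right — $a_\lambda + a_\mu$ with possibly $\lambda \neq \mu$ — requires a little care.
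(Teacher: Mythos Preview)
Your approach is essentially the paper's: identify $A=k[H]/(t^{a_1})$ with the $k$-span of $\{\overline{t^w} : w \in \Apery(H,a_1)\}$, observe that stretchedness forces the Hilbert function $(1,\,n-1,\,1,\ldots,1)$ with $\ell-1$ trailing ones, and read off the Ap\'ery set case by case. The converse you sketch is straightforward and the paper in fact omits it.

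The one genuine gap is in the $\ell \ge 3$ case. You write ``$x_2 = 2a_\lambda$'' and then propagate, but you never justify why the unique $h$ with $\overline{t^h}\in\fkm^2\setminus\fkm^3$ must be a \emph{doubled} generator rather than $a_\lambda+a_\mu$ with $\lambda\neq\mu$ (your sentence ``$x_{j+1}=x_j+(\text{the generator of }\fkm_A^2)$'' is also not quite what you want; you mean a fixed generator of $\fkm_A$). This is exactly the step the paper isolates: assuming $h=a_\lambda+a_\mu$ with $\lambda\neq\mu$, one shows $h+a_\rho-a_1\in H$ for \emph{every} $\rho\ge 2$, so that $\fkm^3=\fkm\cdot(\overline{t^h})=0$, contradicting $\ell\ge 3$. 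The reason is that principality of $\fkm^2=(\overline{t^h})$ forces, for every pair $i,j\ge 2$, either $a_i+a_j-h\in H$ or $a_i+a_j-a_1\in H$; taking $(i,j)=(\lambda,\rho)$ with $\rho\neq\mu$ gives $a_\rho-a_\mu\notin H$, hence $a_\lambda+a_\rho-a_1\in H$, and adding $a_\mu$ yields $h+a_\rho-a_1\in H$; the case $\rho=\mu$ follows by symmetry in $\lambda,\mu$. Once you insert this argument, your outline goes through and matches the paper's proof.
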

	
	\begin{proof}
		If $\ell =1$, i.e., $H$ has maximal embedding dimension, then the conclusion is clear. 
		
		Suppose that $\ell \ge 2$ and let $\fkm$ denote the maximal ideal of $k[H]/(t^{a_1})$. 
		Notice that 
		$$
		k[H]/(t^{a_1}) = \sum_{h \in \Apery(H, a_1)} k \overline{t^{h}}.
		$$
		Since $k[H]/(t^{a_1})$ is stretched, we also have
		$$
		k[H]/(t^{a_1}) \cong k + \fkm/\fkm^2 + \fkm^2/\fkm^3 + \cdots + \fkm^\ell/\fkm^{\ell+1}
		$$
		and $\dim_k \fkm^i/\fkm^{i+1} = 1$ for all $2\le i \le \ell$.
		
		Let $h \in \Apery(H, a_1)$ such that $0 \ne \overline{t^h} \in \fkm^2/\fkm^3$.
		Then we can write $h = a_\lambda + a_\mu$ for some $2 \le \lambda \le \mu \le n$.
		If $\ell = 2$, then $\fkm^3 = (0)$ and $$k[H]/(t^{a_1}) = k + \sum_{i=2}^n k\overline{t^{a_i}} + k\overline{t^{a_\lambda + a_\mu}}.$$
		Hence $\Apery(H,a) = \{0, a_2, \ldots, a_n\} \cup \{a_\lambda + a_\mu\}$.

		Suppose that $\ell \ge 3$. Then $\fkm^3/\fkm^4 \ne (0)$. 
		If $\lambda \ne \mu$, then $h + a_\rho - a_1 \in H$ for all $2\le \rho \le n$, which implies the contradiction that $\fkm^3/\fkm^4 = (0)$. Hence $\lambda = \mu$ and $\{\overline{t^{i a_\lambda}}\}$ is a basis of $k$-vector space $\fkm^i/\fkm^{i+1}$ for all $2 \le i \le \ell$.
		Thus, $\Apery(H,a_1) = \{0, a_2, \ldots ,a_n\} \cup \{2a_\lambda, 3a_\lambda, \ldots , \ell a_\lambda\}$ as desired.
	\end{proof}

\subsection{Defining ideals of numerical semigroup rings}

	The following is typically used in determining the defining ideal $I_H$ of $k[H]$.

	\begin{prop}\label{Nak}
		Let $J$ be a graded ideal of $S=k[X_1, X_2,\ldots , X_n]$ such that $J \subseteq I_H$.
		If $\dim_k S/(J + (X_1)) = a_1$, then $I_H = J$. 
	\end{prop}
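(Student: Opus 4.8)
The plan is to use the fact that $k[H]$ is a one-dimensional Cohen-Macaulay graded ring with $t^{a_1}$ a homogeneous non-zerodivisor, so that $k[H]/(t^{a_1})$ is a graded Artinian ring of length equal to $a_1$ (this length equals the multiplicity $e(k[H]) = a_1$, and can be seen directly from $k[H]/(t^{a_1}) = \sum_{h \in \Apery(H,a_1)} k\,\overline{t^h}$ together with $|\Apery(H,a_1)| = a_1$). The hypothesis $J \subseteq I_H$ gives a graded surjection $S/J \twoheadrightarrow S/I_H \cong k[H]$. Since $X_1 \mapsto t^{a_1}$ under $\varphi_H$, reducing modulo $X_1$ yields a graded surjection
\[
S/(J + (X_1)) \twoheadrightarrow k[H]/(t^{a_1}).
\]
The right-hand side has $k$-dimension $a_1$, while the left-hand side has $k$-dimension $a_1$ by hypothesis; hence this surjection is an isomorphism.

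Next I would transfer this back. Consider the graded short exact sequences coming from multiplication by $X_1$ on $S/J$ and by $t^{a_1}$ on $k[H]$, compatible via the surjection $\pi: S/J \to k[H]$. Since $t^{a_1}$ is a non-zerodivisor on $k[H]$, in each graded degree $d$ we have $\dim_k [k[H]]_d - \dim_k [k[H]]_{d - a_1} = \dim_k [k[H]/(t^{a_1})]_d$. For $S/J$ we only get the inequality $\dim_k [S/J]_d - \dim_k [S/J]_{d-a_1} \le \dim_k [S/(J+(X_1))]_d$, with equality in degree $d$ exactly when $X_1 \colon [S/J]_{d-a_1} \to [S/J]_d$ is injective. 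Summing the isomorphism $[S/(J+(X_1))]_d \cong [k[H]/(t^{a_1})]_d$ over all $d$ and using that $\pi$ is degreewise surjective, one shows by induction on $d$ that $\pi$ is degreewise injective and that $X_1$ is a non-zerodivisor on $S/J$: in the lowest degree where $\pi$ could fail to be injective, comparing dimensions with the Artinian quotient forces a contradiction. Therefore $\pi$ is an isomorphism, i.e. $J = I_H$.

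Alternatively, and perhaps more cleanly, one can argue as follows. Since $J \subseteq I_H$ and $I_H$ is prime of height $n-1$ with $S/I_H$ one-dimensional, every minimal prime of $S/J$ containing the image of $X_1$ would have to contract appropriately; but the finiteness $\dim_k S/(J+(X_1)) = a_1 < \infty$ already shows $X_1$ is a parameter on $S/J$, so $\dim S/J \le 1$, and combined with $J \subseteq I_H$ this gives $\dim S/J = 1$ with $I_H/J$ a nilpotent (in fact finite-length) ideal. Then $X_1$ is a non-zerodivisor modulo the nilradical, and the length count $\operatorname{length}(S/(J+(X_1))) = a_1 = \operatorname{length}(k[H]/(t^{a_1})) = \operatorname{length}((S/I_H)/(X_1))$ forces $\operatorname{length}(I_H/J) \otimes (\text{contributions}) = 0$, hence $J = I_H$.

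The main obstacle is the passage from the equality of dimensions \emph{after} killing $X_1$ back to the equality $J = I_H$ \emph{before} killing $X_1$: this requires knowing that $X_1$ behaves like a non-zerodivisor on $S/J$, which is not given a priori and must be extracted from the finiteness/length hypothesis. The degreewise induction sketched above, comparing Hilbert functions of $S/J$ and $k[H]$ and using that their common quotient by $X_1$ (resp. $t^{a_1}$) has the same finite dimension $a_1$, is the technical heart; I would write it out carefully as the one nontrivial step.
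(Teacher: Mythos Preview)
Your first approach (the degreewise Hilbert-function induction) is correct, but it is considerably more work than what the paper does. The paper's proof is a two-line application of graded Nakayama: from $J+(X_1)\subseteq I_H+(X_1)$ and the equality of $k$-dimensions one gets $J+(X_1)=I_H+(X_1)$; then, since $I_H$ is prime and $X_1\notin I_H$, any homogeneous $f\in I_H$ written as $f=g+X_1h$ with $g\in J$ forces $X_1h\in I_H$ and hence $h\in I_H$, so $I_H=J+X_1 I_H$, and graded Nakayama gives $I_H/J=0$.

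The point you identify as ``the main obstacle''---making $X_1$ a non-zerodivisor on $S/J$---is not actually needed. What is needed (and is free) is that $X_1$ is a non-zerodivisor on $S/I_H$, because $I_H$ is prime; this alone lets one pull the $X_1$ inside $I_H$ and run Nakayama on $I_H/J$. Your induction implicitly re-proves this regularity on $S/J$ as a by-product, which is why it works, but it is the long way around. Your second ``alternative'' sketch is too vague to stand as a proof (the phrase ``$\operatorname{length}(I_H/J)\otimes(\text{contributions})=0$'' is not an argument), so if you keep anything, keep the Hilbert-function induction---or, better, replace the whole thing by the Nakayama step above.
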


	\begin{proof}
		Since $J + (X_1) \subseteq I_H + (X_1)$ and $\dim_k S/(J+(X_1) = a_1 = \dim_k k[H]/(t^{a_1}) = \dim_k S/(I_H +(X_1))$, we have $J+(X_1) = I_H +(X_1)$. By applying the graded Nakayama's lemma, we conclude $I_H = J$.
	\end{proof}

	In this paper, the following lemma provides a method to find such $J$ as in Proposition \ref{Nak}.
	Here, remember that $\deg X_i = a_i$ for each $1 \le i \le n$ in $S=k[X_1, X_2,\ldots , X_n]$.

	\begin{lem}
		Let $f_1,f_2, g_1,g_2\in S_+$ be monomials in $X_1, X_2,\ldots , X_n$. 
		Then $f_1 g_2 - f_2 g_1 \in I_H$ if and only if $\deg g_1 - \deg f_1 = \deg g_2 - \deg f_2$.
	\end{lem}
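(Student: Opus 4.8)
The plan is to read the statement off directly from the fact that $\varphi_H \colon S \to k[H]$ is a \emph{graded} ring homomorphism. Recall that $S = k[X_1, X_2, \ldots, X_n]$ carries the grading $\deg X_i = a_i$, and $\varphi_H(X_i) = t^{a_i}$. Hence for any monomial $m = X_1^{e_1} X_2^{e_2} \cdots X_n^{e_n}$ of degree $d = \sum_{i=1}^n e_i a_i$ we have $\varphi_H(m) = t^{e_1 a_1 + \cdots + e_n a_n} = t^{d}$; that is, $\varphi_H$ sends a monomial to the single power of $t$ whose exponent is the degree of that monomial. This observation is the only ingredient needed.

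First I would use that $\deg$ is additive on products of monomials to compute
$$
\varphi_H(f_1 g_2 - f_2 g_1) = t^{\deg f_1 + \deg g_2} - t^{\deg f_2 + \deg g_1}
$$
in $V = k[t]$. Since $I_H = \Ker \varphi_H$ by definition, the element $f_1 g_2 - f_2 g_1$ lies in $I_H$ if and only if this difference vanishes in $k[t]$. But in the polynomial ring $k[t]$ the monomials $t^a$ and $t^b$ coincide exactly when $a = b$, so $\varphi_H(f_1 g_2 - f_2 g_1) = 0$ is equivalent to $\deg f_1 + \deg g_2 = \deg f_2 + \deg g_1$, which after rearranging reads $\deg g_1 - \deg f_1 = \deg g_2 - \deg f_2$. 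This settles both implications at once.

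There is essentially no obstacle: the lemma is a bookkeeping consequence of the definition of $I_H$ together with the grading on $S$, and no nondegeneracy hypothesis on the monomials is required. The one point worth recording is the boundary case $f_1 g_2 = f_2 g_1$: here the difference is $0 \in I_H$ and the degree identity holds trivially, so the equivalence persists. I would present the argument in two short lines, essentially the display above followed by the comparison of exponents in $k[t]$.
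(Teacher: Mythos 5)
Your argument is correct and is essentially identical to the paper's proof: both compute $\varphi_H(f_1g_2 - f_2g_1) = t^{\deg f_1 + \deg g_2} - t^{\deg f_2 + \deg g_1}$ and compare exponents in $k[t]$. Nothing further is needed.
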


	\begin{proof}
		We put $p_i = \deg f_i$ and $q_i = \deg g_i$ for each $i = 1,2$.
		Then, $\varphi_H(f_1 g_2 - f_2 g_1) = t^{p_1 + q_2} - t^{p_2 + q_1}$.
		Hence $f_1 g_2 - f_2 g_1 \in \Ker \varphi_H = I_H$ if and only if $p_1 + q_2 = p_2 + q_1$, which is equivalent to $\deg g_1 - \deg f_1 = \deg g_2 - \deg f_2$.
	\end{proof}

	\begin{cor}\label{commondiff}
		Let $\ell_1, \ell_2,\ldots , \ell_n, m_1, m_2,\ldots , m_n$ be positive integers and put $M=\left(\begin{smallmatrix}X_1^{\ell_1} & X_2^{\ell_2} & \cdots & X_{n-1}^{\ell_{n-1}} & X_n^{\ell_n}\\ X_2^{m_2} & X_3^{m_3} & \cdots & X_n^{m_n} & X_1^{m_1} \end{smallmatrix}\right)$.
		If $$m_2 a_2 - \ell_1 a_1 = m_3 a_3 - \ell_2 a_2 = \cdots = m_n a_n - \ell_{n-1} a_{n-1} = m_1 a_1 - \ell_n a_n,$$ then $\detid_2 (M) \subseteq J$.
	\end{cor}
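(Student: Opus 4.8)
The plan is to recognize the $2\times 2$ minors of $M$ as binomials of exactly the type treated in the preceding lemma and then apply that lemma term by term. Denote by $f_i$ the top entry and by $g_i$ the bottom entry of the $i$-th column of $M$, so that $f_i = X_i^{\ell_i}$ for every $1\le i\le n$, while $g_i = X_{i+1}^{m_{i+1}}$ for $1\le i\le n-1$ and $g_n = X_1^{m_1}$. By definition $\detid_2(M)$ is generated by the minors $f_i g_j - f_j g_i$ with $1\le i<j\le n$, each of which is a difference of two monomials of $S_+$.

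First I would record the relevant degrees, namely $\deg f_i = \ell_i a_i$ for all $i$, $\deg g_i = m_{i+1}a_{i+1}$ for $i\le n-1$, and $\deg g_n = m_1 a_1$, and then observe that the chain of equalities in the hypothesis,
$$
m_2 a_2 - \ell_1 a_1 = m_3 a_3 - \ell_2 a_2 = \cdots = m_n a_n - \ell_{n-1}a_{n-1} = m_1 a_1 - \ell_n a_n,
$$
is precisely the assertion that the quantity $\deg g_i - \deg f_i$ does not depend on $i$. In particular $\deg g_i - \deg f_i = \deg g_j - \deg f_j$ for every pair of indices $i,j$.

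The conclusion is then immediate from the preceding lemma applied to the monomials $f_i, f_j, g_i, g_j \in S_+$ for each pair $1\le i<j\le n$: equality of the degree differences yields $f_i g_j - f_j g_i \in I_H$, and since these binomials generate $\detid_2(M)$ we obtain $\detid_2(M)\subseteq I_H$, so that one may take $J = \detid_2(M)$ in Proposition \ref{Nak}. There is essentially no obstacle here; the statement is a formal consequence of the lemma, and the only point that requires a moment's care is the bookkeeping around the last column, whose bottom entry is $X_1^{m_1}$ rather than a putative $X_{n+1}^{m_{n+1}}$, so that the degree difference $\deg g_n - \deg f_n = m_1 a_1 - \ell_n a_n$ coming from that column is exactly the final term of the cyclic chain appearing in the hypothesis.
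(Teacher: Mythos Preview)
Your proof is correct and matches the paper's approach: the paper states this corollary without proof, treating it as an immediate consequence of the preceding lemma applied column-pair by column-pair, exactly as you have written it out. You also correctly noticed that the ``$J$'' in the statement is a slip for $I_H$ (the symbol $J$ has no prior definition at that point), and your interpretation is the intended one.
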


\section{Proof of Theorem \ref{main theorem}}

In this section, we prove Theorem \ref{main theorem} by demonstrating the following statement.

\begin{thm}\label{thmSec3}
	Let $H = \left<a_1, a_2, \ldots , a_n\right>$ be a numerical semigroup with embedding dimension $n\ge 3$ and multiplicity $a_1\ge 3$. 
	Suppose that $k[H]/(t^{a_1})$ is stretched and that $\PF(H) = \{h + \alpha, h+ 2\alpha , \ldots , h+ (n-1)\alpha\}$ for some $h \ge 0$ and $\alpha > 0$.
	Then, after a suitable permutation of $a_2, a_3, \ldots , a_n$, we have
	$$
	I_H = \begin{pmatrix}
		X_1^{h_1} & X_2^\ell & X_3 & \cdots & X_{n-1} & X_n\\
		X_2 & X_3 & X_4 & \cdots & X_n & X_1^p
 	\end{pmatrix}
	$$
	for some positive integers $h_1, \ell, p$.
\end{thm}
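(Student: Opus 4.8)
The plan is to read off the minimal generating set of $H$ explicitly from the two hypotheses, write down the candidate matrix, and then conclude with Corollary~\ref{commondiff} and Proposition~\ref{Nak}. Set $L=a_1-n+1$. By Lemma~\ref{Apery}, $\Apery(H,a_1)=\{0,a_2,\dots,a_n\}\cup T$ with $T=\emptyset$ ($L=1$), $T=\{a_\lambda+a_\mu\}$ ($L=2$), or $T=\{2a_\lambda,\dots,La_\lambda\}$ ($L\ge 3$), for suitable indices. By Lemma~\ref{PFlem2}, $\PF(H)+a_1$ is precisely the set of maximal elements of $\Apery(H,a_1)$ for the order $a\preceq b\iff b-a\in H$. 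First I would note that $0,a_\lambda,2a_\lambda,\dots,(L-1)a_\lambda$ are each strictly below the next in this order, hence non-maximal, so there are at most $a_1-L=n-1$ maximal elements; since $|\PF(H)|=n-1$, these $L$ are the only non-maximal ones. In particular $\lambda=\mu$ when $L=2$ (otherwise $a_\lambda$ and $a_\mu$ would both be non-maximal), the maximal elements are exactly $\{a_i:i\ne\lambda\}\cup\{La_\lambda\}$, and one obtains the ``non-overlap'' relations $ra_\lambda-a_i\notin H$ for $2\le r\le L$, $i\ne\lambda$. (When $L=1$ there is no distinguished index; the relabeling below supplies one.)

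Next I would relabel $a_2:=a_\lambda$ and set $r_2=a_2\bmod a_1$. Since $0,r_2,\dots,Lr_2$ are pairwise distinct, $r_2$ has order $\ge L+1$ in $\Z/a_1\Z$, so $(L+1)r_2\bmod a_1$ is neither $0$ nor any of $r_2,\dots,Lr_2$; the leftover possibility $(L+1)r_2\equiv 0$ forces $a_1/\gcd(a_1,a_2)=L+1$ and is ruled out by a direct count of residues once $\PF(H)$ is known to be an arithmetic progression. Hence $(L+1)r_2\bmod a_1$ is the residue of a unique non-special generator, which I relabel $a_3$. From $La_\lambda-a_1\in\PF(H)$ and $a_2>0$ we get $(L+1)a_2-a_1\in H$, so $(L+1)a_2\notin\Apery(H,a_1)$ while $a_3\in\Apery(H,a_1)$; therefore $(L+1)a_2=h_1a_1+a_3$ for some integer $h_1\ge 1$. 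Put $d:=a_2-h_1a_1$, so that also $d=a_3-La_2$.

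Now $\{a_i-a_1:i\ne\lambda\}$ is the progression $\{h+\alpha,\dots,h+(n-1)\alpha\}$ with the term $La_2-a_1$ deleted, and we already have $a_3=La_2+d$. Combining these with the requirement that the (would-be) chain $a_3,a_4,\dots,a_n$ exhaust the non-special pseudo-Frobenius values rules out $La_2-a_1$ being an interior term: for $n\ge 5$ the complement of an interior index in $\{1,\dots,n-1\}$ is not an arithmetic progression, and for $n=3,4$ the forced value of $d$ would make $a_3=La_2+d$ fail to be a generator. So $La_2-a_1$ is an endpoint. If $La_2-a_1=h+\alpha$ then $d=\alpha$ and, after reordering $a_4,\dots,a_n$, $a_i=a_1+h+(i-1)\alpha$ for $3\le i\le n$, whence $a_n+d=a_1+h+n\alpha$; if $La_2-a_1=h+(n-1)\alpha$ then $d=-\alpha$ and $a_i=a_1+h+(n-i)\alpha$, whence $a_n+d=a_1+h$. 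In either case, combining $La_2\equiv h+j_0\alpha\pmod{a_1}$ (where $La_2-a_1=h+j_0\alpha$) with $a_2\equiv d\pmod{a_1}$ and $L+n-1=a_1$ gives $a_n+d\equiv 0\pmod{a_1}$; since $a_n+d>0$ we may write $a_n+d=pa_1$ with $p\ge 1$.

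To finish I would set $M=\left(\begin{smallmatrix}X_1^{h_1}&X_2^{L}&X_3&\cdots&X_{n-1}&X_n\\ X_2&X_3&X_4&\cdots&X_n&X_1^{p}\end{smallmatrix}\right)$ and $J=\detid_2(M)$. By the previous two steps every column of $M$ has the same value of (degree of bottom entry) $-$ (degree of top entry), namely $d$, so Corollary~\ref{commondiff} gives $J\subseteq I_H$. Reducing modulo $X_1$ kills the two corner entries of $M$, and a short computation with the resulting $2\times 2$ minors in $k[X_2,\dots,X_n]$ shows that the classes of $1,X_2,\dots,X_2^{L},X_3,\dots,X_n$ form a $k$-basis of $S/(J+(X_1))$, so $\dim_k S/(J+(X_1))=L+n-1=a_1$; Proposition~\ref{Nak} then yields $I_H=J$, the asserted presentation (with $\ell=L$). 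I expect the real obstacle to be the middle two steps — extracting from ``$k[H]/(t^{a_1})$ stretched'' and ``$\PF(H)$ an arithmetic progression'' the exact generating set together with the identities $(L+1)a_2=h_1a_1+a_3$ and $a_n+d=pa_1$ — after which Corollary~\ref{commondiff} and the mechanical dimension count through Proposition~\ref{Nak} close the argument, the degenerate possibility $(L+1)r_2\equiv 0\pmod{a_1}$ and the small cases $n=3,4$ being disposed of by direct congruence checks.
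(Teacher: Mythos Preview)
Your skeleton matches the paper's: isolate the special generator $a_\lambda$ from the stretched Ap\'ery set (your counting argument that $\lambda=\mu$ when $L=2$ is essentially Lemma~\ref{AperySec3}), build a candidate matrix, and close with Corollary~\ref{commondiff} and Proposition~\ref{Nak}. The endgame --- common difference $d$, the dimension count modulo $X_1$ --- is correct \emph{once} you know that $La_\lambda-a_1$ is an endpoint of $\PF(H)$ and that $d=\pm\alpha$.

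The gap is precisely there, and it is not a detail. Your ``endpoint'' step is circular: you observe that \emph{if} the chain $a_3,\,a_3+d,\,a_3+2d,\dots$ exhausts the non-special generators, \emph{then} $\PF(H)\setminus\{La_\lambda-a_1\}$ must itself be an arithmetic progression of step $d$, forcing $j_0\in\{1,n-1\}$ and $d=\pm\alpha$. But the existence of that chain is the very conclusion to be proved; nothing you have established excludes, say, $j_0=3$ with $d=2\alpha$, in which case no reordering produces the matrix. (Your $n=3,4$ remark does not help either: for $n=4$ and $j_0=2$ both signs of $d$ make $a_3$ a perfectly good generator --- it is $a_4$ that falls outside.) The paper supplies two independent, nontrivial ingredients that your sketch is missing. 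First, Proposition~\ref{j} rules out interior $j_0$ by a defining-ideal argument unrelated to the matrix: when $2\le j_0\le n-2$, the identity $2La_\lambda=(h+(j_0-1)\alpha+a_1)+(h+(j_0+1)\alpha+a_1)$ yields a minimal binomial generator of $I_H$ of the shape $X_\lambda^{2L}-(\text{quadratic})$, contradicting $X_\lambda^{L+1}\in I_H+(X_1)$ (stretchedness) since $2L>L+1$. Second --- and this is the heart of the proof --- Lemma~\ref{elementary} shows that if $\{0,1,\dots,L\}\cup\{L+r,L+2r,\dots,L+(n-2)r\}$ is a complete residue system modulo $a_1=L+n-1$ then $r=1$; combined with $\gcd(a_1,a_\lambda)=1$ (Lemma~\ref{gcd}, itself a page-long counting argument, not the ``direct check'' you allude to), this forces $\alpha\equiv\pm a_\lambda\pmod{a_1}$ and hence $d=\pm\alpha$. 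Without a substitute for Lemma~\ref{elementary}, the integer $k$ in $d=k\alpha$ is genuinely undetermined and the construction cannot proceed.
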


To prove Theorem \ref{thmSec3}, we first consider what happens when $k[H]/(t^{a_1})$ is stretched and $\PF(H) = \{h + \alpha, h+ 2\alpha , \ldots , h+ (n-1)\alpha\}$ for some $h \ge 0$ and $\alpha > 0$. 
Observe that we may assume $H$ does not have maximal embedding dimension.

Before starting the observation, we begin with the following crucial lemma for our proof.
Although the statement looks elementary, we provide the proof for the sake of completeness.

\begin{lem}\label{elementary}
	Let $\ell \ge 2$ and $n \ge 3$. Put $a = \ell + n -1$. For an integer $1 \le r \le n-2$, if $\{0, 1, \ldots , \ell\} \cup \{\ell + r, \ell + 2r, \ldots , \ell + (n-2)r\}$ is a complete system of representatives modulo $a$, then $r=1$.
\end{lem}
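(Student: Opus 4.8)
The plan is to argue by a counting/covering argument on the residues modulo $a = \ell + n - 1$. The set $T = \{0, 1, \ldots, \ell\} \cup \{\ell + r, \ell + 2r, \ldots, \ell + (n-2)r\}$ has exactly $(\ell + 1) + (n - 2) = \ell + n - 1 = a$ elements (counted with multiplicity), so the hypothesis that $T$ is a complete system of representatives modulo $a$ is equivalent to saying that the $a$ listed integers are pairwise incongruent modulo $a$. In particular the $n-2$ integers $\ell + r, \ell + 2r, \ldots, \ell + (n-2)r$ must be pairwise incongruent mod $a$, and moreover none of them may be congruent to any element of $\{0, 1, \ldots, \ell\}$; equivalently, for $1 \le i \le n-2$ the residue of $\ell + ir$ modulo $a$ must lie in $\{\ell + 1, \ell + 2, \ldots, \ell + (n-2)\} = \{\ell + 1, \ldots, a - 1\}$ (taking representatives in $\{0, 1, \ldots, a-1\}$), and these $n-2$ residues must be exactly that set in some order.

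First I would reduce to understanding $ir \bmod a$. Write $d = \gcd(r, a)$. The arithmetic progression $r, 2r, \ldots, (n-2)r$ modulo $a$ takes values in the subgroup $d\mathbb{Z}/a\mathbb{Z}$, which has $a/d$ elements; since the $n-2$ values $\ell + ir$ are forced to be distinct mod $a$, we need $n - 2 \le a/d$, i.e. $d \le a/(n-2) = (\ell + n - 1)/(n-2)$. Next, the key constraint: each residue $ir \bmod a$ must lie in the window $W = \{1, 2, \ldots, n-2\}$ (after subtracting $\ell$), a set of $n-2$ consecutive integers starting at $1$. So I want to show that an arithmetic progression $r, 2r, \ldots, (n-2)r$ in $\mathbb{Z}/a\mathbb{Z}$ all of whose terms land in a block of $n-2$ consecutive residues $\{1, \ldots, n-2\}$, and which hits all of them, forces $r \equiv 1$. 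The cleanest way: the term $i = 1$ gives $r \bmod a \in \{1, \ldots, n-2\}$, so $1 \le r \le n - 2$ already (as an ordinary integer, $r$ is in this range, possibly after noting $r \le n-2$ was given). Then $2r \le 2(n-2)$; if $r \ge 2$ then $2r \ge 4$, and I'd track whether $2r$ can exceed $a - 1$ to wrap around. Since $a = \ell + n - 1 \ge 2 + n - 1 = n + 1 > n - 2$, for $r \ge 2$ we have $2r$ could be as large as $2(n-2) = 2n - 4$; this exceeds $a-1 = \ell + n - 2$ precisely when $\ell < n - 2$. The argument splits: if $r \ge 2$, consider the largest $i$ with $ir < a$ (no wraparound yet); then $r, 2r, \ldots, ir$ are $i$ distinct values in $\{1, \ldots, n-2\}$ that are multiples of $r$, so they are $r, 2r, \ldots, ir$ literally, forcing $ir \le n-2$ hence $i \le (n-2)/r$. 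But then the remaining $n - 2 - i$ terms $(i+1)r, \ldots, (n-2)r$ reduce mod $a$ to values in $\{1, \ldots, n-2\}$ that are $\equiv 0 \pmod{d}$ with $d = \gcd(r,a)$, and I derive a contradiction with the requirement that $T$ covers the residue $\ell + (n-2)$, or more directly that the top of the window, $n - 2$, must be hit: $n-2$ is a multiple of $r$ only if $r \mid n - 2$, and then I can push the progression past the wrap to see it revisits small residues, contradicting distinctness.

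The step I expect to be the main obstacle is handling the wraparound cleanly — controlling which terms $ir$ exceed $a$ and showing that after wrapping, a term necessarily collides with an earlier term or falls outside the window $\{1, \ldots, n-2\}$. To make this rigorous and avoid a messy case analysis, I would instead use the following slicker observation: since $\{\ell+r, \ldots, \ell+(n-2)r\}$ together with $\{0, \ldots, \ell\}$ is a complete residue system, and $\ell + r$ and $\ell + (n-2)r$ are both in $T$, consider consecutive differences. The set $T \setminus \{0, 1, \ldots, \ell\}$ reduced mod $a$ equals $\{\ell+1, \ldots, a-1\}$, a set of $n-2$ consecutive residues. An arithmetic progression with common difference $r$ in $\mathbb{Z}/a\mathbb{Z}$ whose image is a set of $n-2$ consecutive residues: list the progression in increasing order of residue; consecutive gaps in a length-$(n-2)$ progression mod $a$ are all $r$ or differ by the single "wrap" of size $r - a$ shifted; for the image to be $n-2$ consecutive integers, all successive gaps in sorted order must be $1$, which (since there are $n-2$ terms and the progression has step $r$) forces either $r \equiv 1$ or $r \equiv -1 \pmod a$; but $r \equiv -1$ would make the first term $\ell + r \equiv \ell - 1$, which lies in $\{0, \ldots, \ell\}$, contradicting that $\ell + r \notin \{0, \ldots, \ell\} \bmod a$. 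Hence $r \equiv 1 \pmod a$, and since $1 \le r \le n - 2 < a$, we conclude $r = 1$. I would write the final proof along these lines, spelling out the "all sorted gaps are $1$ forces step $\pm 1$" claim via the structure of cyclic progressions.
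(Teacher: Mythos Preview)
Your reformulation is correct: the hypothesis says exactly that $\{ir \bmod a : 1 \le i \le n-2\} = \{1, \ldots, n-2\}$, and ruling out $r \equiv -1$ at the end is fine. The gap is in the ``slicker'' step, the claim that an arithmetic progression of length $n-2$ in $\mathbb{Z}/a\mathbb{Z}$ whose image is a block of $n-2$ consecutive residues must have step $\equiv \pm 1$. Your sketch conflates two notions of gap: in \emph{progression order} the successive differences of $r, 2r, \ldots$ modulo $a$ are indeed $r$ or $r - a$, but after \emph{sorting}, the gaps are governed by the three-distance phenomenon and are not simply $r$ and $a-r$. More to the point, the bare statement ``all sorted gaps equal $1$ forces step $\pm 1$'' is false without using $\ell \ge 2$: when $n - 2 = a - 1$ (i.e.\ $\ell = 0$) \emph{every} unit $r$ gives image $\{1,\ldots,a-1\}$ with all sorted gaps equal to $1$. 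So the constraint $\ell \ge 2$ must enter precisely at this structural step, not only when excluding $r \equiv -1$, and your outline gives no mechanism for it to do so.

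The paper in fact takes the route you called messy and set aside: it splits into $1 < r \le \ell$ and $\ell < r \le n-2$ and exhibits an explicit collision in each case. For $r \le \ell$, writing $n - 1 = pr + q$ with $0 \le q < r$ yields $\ell + (p+1)r \equiv r - q \pmod a$, with $\ell + (p+1)r \in S_2$ and $0 < r - q \le \ell$ so that $r - q \in S_1$. For $r > \ell$, the residue $r$ itself cannot come from $S_1$ (every element of $S_1$ is at most $\ell < r < a$), so $r \equiv \ell + kr$ for some $1 \le k \le n-2$, whence $\ell + (k-1)r \equiv 0$ collides with $0 \in S_1$. Your first approach was essentially headed here; the wraparound difficulty you anticipated is exactly what the split at $r = \ell$ dissolves, since for $r\le\ell$ the first overshoot $i_0 r$ with $i_0 r>m$ automatically satisfies $i_0 r\le m+r\le m+\ell<a$ and hence lands in $S_1$ without wrapping.
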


\begin{proof}
	When $n=3$, the statement is clear. Thus, we may assume $n \ge 4$.

	Let $S_1 = \{0, 1,\ldots, \ell\}$, $S_2 = \{\ell + r, \ell + 2r, . . . , \ell + (n-2)r\}$, and $S=S_1 \cup S_2$.
	It is clear that if $r=1$, then $S$ is a complete system of representatives modulo $a$. 
	Suppose that $1<r\le n-2$ and we will look for a contradiction.

 	First, suppose $1<r\le\ell$. 
 	Then we can write $n-1=pr+q$ for some integers $p \ge 0$ and $0\le q<r$. 
 	Since $p = \frac{n-1-q}{r} \le \frac{n-1}{2}$ and $n \ge 4$, we have $1\le p\le n-3$. 
	We then have $$a=\ell+n-1=\ell+pr+q=\ell+(p+1)r - (r-q).$$
	Hence, $\ell+(p+1)r\equiv r-q \pmod a$. 
	Observe that $0<r-q \le r\le \ell$. 
	This implies that $\ell+(p+1)r \in S_2$ and $r-q \in S_1$ are distinct numbers in $S$ that are congruent modulo $a$. 
	Hence, $S$ is not a complete system of representatives.
	Therefore, we must have $\ell<r\le n-2$. 

	Since $S$ is assumed to be a complete system of representatives modulo $a$, there exists $s \in S$ such that $r \equiv s \pmod a$.
	If $s\in S_1$, then $r-s\ge r-\ell>0$. On the other hand, since $a>n-2\ge r\ge r-s$, we have $0<r-s<a$ which contradicts the assumption that  $r\equiv s\pmod a$. 
	Therefore, $s \in S_2$, and we can write $s=\ell+kr$ for some $1\le k\le n-2$. 
	Then, $s-r=\ell+(k-1)r$. However, this is impossible because $0$ and $\ell+(k-1)r$ are distinct representatives in $S$, which contradicts the assumption that $S$ is a complete system of representatives modulo $a$.
\end{proof}

	We begin with the following.
	
	\begin{lem}\label{AperySec3}
		Let $H = \left<a_1, a_2,\ldots , a_n\right>$ be a numerical semigroup with embedding dimension $n$ and multiplicity $a_1$.
		Suppose $k[H]/(t^{a_1})$ is stretched and $\PF(H) =\{h+\alpha, h+2\alpha, \ldots , h+(n-1)\alpha\}$ for some integers $h \ge 0$ and $\alpha > 0$.
		Put $\ell = a_1 - n +1$.
		Then
		$$
		\Apery(H,a_1) = \{0, a_2, a_3,\ldots , a_n\} \cup \{2 a_\lambda, 3a_\lambda,\ldots , \ell a_\lambda\}
		$$
		for some $2 \le \lambda \le n$.
	\end{lem}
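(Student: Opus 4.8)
The plan is to combine the structural description of $\Apery(H,a_1)$ coming from stretchedness (Lemma \ref{Apery}) with the arithmetic-sequence hypothesis on $\PF(H)$, and to eliminate the two undesired cases ($\ell=2$, and $\ell\ge 3$ with $\lambda=1$) using Lemma \ref{PFlem2} together with the elementary congruence fact Lemma \ref{elementary}. Since we may assume $H$ does not have maximal embedding dimension, we have $\ell = a_1-n+1 \ge 2$, so Lemma \ref{Apery} gives that $\Apery(H,a_1)$ is either $\{0,a_2,\dots,a_n\}\cup\{a_\lambda+a_\mu\}$ (when $\ell=2$) or $\{0,a_2,\dots,a_n\}\cup\{2a_\lambda,3a_\lambda,\dots,\ell a_\lambda\}$ (when $\ell\ge 3$), for some $1\le\lambda\le\mu\le n$. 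The goal is to show $\lambda\ge 2$ in the second case and to rule the first case out entirely (or absorb it, but note the target formula with $\ell=2$ reads $\{0,a_2,\dots,a_n\}\cup\{2a_\lambda\}$, i.e. the case $\lambda=\mu$ of the $\ell=2$ description, so really we must show that when $\ell=2$ we can take $\lambda=\mu\ge 2$).

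First I would compute $\PF(H)$ from the Apéry set via Lemma \ref{PFlem2}: $\PF(H)=\{w-a_1 \mid w\in\Apery(H,a_1),\ w'-w\notin H \text{ for all } w'\in\Apery(H,a_1)\setminus\{w\}\}$. In the stretched situation the Apéry set is totally ordered by the relation "$w\le w'$ iff $w'-w\in H$" — indeed writing $b_i$ for the element of $\Apery$ in each socle degree, $b_0=0 < b_1 \in\{a_2,\dots,a_n\} < b_2 < \dots < b_\ell$, where $b_i$ lies in $\fkm^i/\fkm^{i+1}$; the elements $a_2,\dots,a_n$ of "degree $1$" are pairwise incomparable (their differences are not in $H$, since $a_i-a_j\notin H$ by minimality of the generators), and the "chain" elements $b_2,\dots,b_\ell$ each dominate everything below. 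Hence the maximal elements of $\Apery(H,a_1)$ under this order are: the top chain element $b_\ell$, together with every $a_i$ that is not dominated by $b_2$. This pins down $\PF(H)$ as an explicit set, and matching it against $\{h+\alpha,\dots,h+(n-1)\alpha\}$ — a set of $n-1$ elements in arithmetic progression — forces strong constraints. In particular $b_\ell - a_1$ must be one of the $h+j\alpha$, and the surviving $a_i-a_1$'s must fill out the rest of the progression.

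The main obstacle, and the place where Lemma \ref{elementary} enters, is ruling out $\lambda=1$ when $\ell\ge 3$ (and the analogous degenerate configuration when $\ell=2$). If $\lambda=1$ then $2a_1,\dots,\ell a_1$ would lie in $\Apery(H,a_1)$, which is absurd since $a_1\equiv 0$ and $a_1\in H$, so actually $\lambda=1$ is immediately impossible for $\ell\ge 3$ — the real content is to show that the remaining data is consistent only in the claimed form, i.e. that exactly one generator, say $a_\lambda$ with $\lambda\ge 2$, is "small" (contributes the chain $2a_\lambda,\dots,\ell a_\lambda$) and that no other configuration of the $w_i$'s modulo $a_1$ is compatible with $\PF(H)$ being an $(n-1)$-term arithmetic progression. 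Here is where I would argue via the complete residue system: the integers $\{w_0,w_1,\dots,w_{a_1-1}\}=\Apery(H,a_1)$ form a complete set of representatives mod $a_1$, and with $a_1 = \ell+n-1$ and the Apéry set as in Lemma \ref{Apery}, translating the generators and the chain into residues puts us exactly in the hypothesis of Lemma \ref{elementary} (with $r$ the common difference $\alpha$ read mod $a_1$, or rather its analogue), forcing the "step" to be $1$ and thereby identifying $\lambda$ and the progression. I expect the bookkeeping — correctly normalizing which $a_i$ plays the role of $a_\lambda$, tracking $h$ versus the Frobenius number, and checking the edge case $\ell=2$ separately — to be the fiddly part, while the conceptual core is just: stretched $\Rightarrow$ Apéry set is a chain plus an antichain of generators, and an arithmetic $\PF(H)$ of length $n-1$ forces the antichain to be all but one generator.
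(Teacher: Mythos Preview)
Your plan correctly notes that for $\ell=1$ and $\ell\ge 3$ the conclusion is immediate from Lemma~\ref{Apery} (and $\lambda\ge 2$ is automatic since $2a_1\notin\Apery(H,a_1)$, as you yourself observe). The only genuine case is $\ell=2$: there Lemma~\ref{Apery} gives $\Apery(H,a_1)=\{0,a_2,\dots,a_n\}\cup\{a_\lambda+a_\mu\}$ with $2\le\lambda\le\mu\le n$, and one must show $\lambda=\mu$.

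Your invocation of Lemma~\ref{elementary} here is misplaced. That lemma is not used in the paper's proof of Lemma~\ref{AperySec3} at all; it enters only in the \emph{next} step (Theorems~\ref{thm:j=1} and~\ref{thm:j=n-1}), where the Ap\'ery set is already known to have the form $\{0,b,2b,\dots,\ell b\}\cup\{\ell b+\alpha,\dots,\ell b+(n-2)\alpha\}$ and one must pin down $\alpha$ modulo $a_1$. At the stage of Lemma~\ref{AperySec3} you do not yet have such a description, so the hypotheses of Lemma~\ref{elementary} are simply not available. The middle portion of your plan, which tries to set up that lemma, does not lead anywhere for this statement.

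Ironically, your very last sentence contains a correct and clean argument for the $\ell=2$ case, arguably simpler than the paper's: if $\lambda\ne\mu$, then both $a_\lambda$ and $a_\mu$ lie strictly below $a_\lambda+a_\mu$ in the Ap\'ery order (since $(a_\lambda+a_\mu)-a_\lambda=a_\mu\in H$ and symmetrically), so neither is maximal; and $a_\lambda+a_\mu$ itself \emph{is} maximal (were $a_i-(a_\lambda+a_\mu)\in H$ for some $i$, then $a_i-a_\lambda\in H$, contradicting minimality of the generators). Hence the maximal elements of $\Apery(H,a_1)$ number at most $1+(n-1-2)=n-2$, giving $|\PF(H)|\le n-2<n-1$, a contradiction. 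Note this uses only $|\PF(H)|=n-1$, not the arithmetic-progression structure.

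The paper's route for $\ell=2$ is different and more hands-on: it writes $\Apery(H,a_1)=\{0\}\cup\{h+i\alpha+a_1:1\le i\le n-1\}\cup\{\beta+a_1\}$ with $\beta\notin H\cup\PF(H)$, argues that some generator $a_\lambda$ must equal $\beta+a_1$ (else $\PF(H)=\{a_i-a_1\}$ forces maximal embedding dimension), picks $\gamma\in\PF(H)$ with $\gamma-\beta\in H_{>0}$, and then shows by a coefficient-by-coefficient analysis that $\gamma-\beta=a_\lambda$, whence the extra Ap\'ery element $\gamma+a_1$ equals $2a_\lambda$. Your counting observation is shorter; you should lead with it, drop the Lemma~\ref{elementary} detour entirely, and state explicitly that nothing further is needed when $\ell\ge 3$.
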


	\begin{proof}
		If $\ell =1$ or $\ell \ge 3$, then the result follows directly from Lemma \ref{Apery}. Thus, we may assume $\ell = 2$.

		Since $\PF(H) = \{h+\alpha, h+2\alpha, \ldots , h+(n-1) \alpha\}$, we have
		$$\Apery(H, a_1) = \{0, h+\alpha + a_1, h+2\alpha + a_1, \ldots , h+(n-1)\alpha + a_1\} \cup \{\beta + a_1\}$$ 
		for some $\beta \in \mathbb{N} \setminus H$.
		If $\{a_2, a_3,\ldots , a_n\} = \{h + i\alpha + a_1 \mid 1 \le i \le n-1\}$, then it follows that $\PF(H) = \{a_2-a_1, \ldots, a_n-a_1\}$.
		This leads to a contradiction as $H$ does not have maximal embedding dimension.
		Therefore, there exists $2 \le \lambda \le n$ such that $a_\lambda = \beta + a_1$.

		On the other hand, since $\beta \notin H \cup \PF(H)$, there exists $\gamma \in \PF(H)$ such that $0< \gamma - \beta \in H$. 
		We may write $\gamma = h + j \alpha$ for some $1 \le j \le n-1$.
		Then, we have
		$$\gamma - \beta = (h+ j\alpha) - (a_\lambda - a_1) = c_1 a_1 + \cdots + c_n a_n$$ 
		for some $c_1, \ldots , c_n \ge 0$ with $c_1 + \cdots + c_n > 0$.

		If $c_1 > 0$, then $h + j \alpha - a_\lambda \in H$, which is a contradiction because $h + j\alpha \notin H$. 
		Thus $c_1 = 0$.
		Next, suppose there exists an index $2 \le i \le n$ such that $i \ne \lambda$ and $c_i > 0$.
		Then, $(h+ j\alpha + a_1) -a_i \in H$. The maximality of $a_i$ in $\Apery(H, a_1)$ implies that $a_i = h+ j \alpha + a_1$.
		Hence, we have
		$$
			(1-c_i)a_i = \sum_{2 \le k \le n,~k \notin \{i,\lambda\}} c_k a_k + (c_\lambda+1) a_\lambda.
		$$
		However, this is impossible because the left-hand side is non-positive, while the right-hand side is positive.
		Therefore $c_i = 0$ for each $i\in \{1,2, \ldots , n\} \setminus\{\lambda\}$ and whence $c_\lambda \ge 1$.

		Now, suppose $c_\lambda \ge 2$.
		Since $(c_\lambda + 1) a_\lambda = h + j \alpha + a_1 \in \Apery(H, a_1)$, it follows that $2a_\lambda- a_1, 3a_\lambda- a_1 \notin H$, otherwise we wolud have $h + j \alpha = (c_\lambda +1) a_\lambda - a_1 \in H$ which is a contradiction.
		However, this implies the impossible inequality $a_1 > n + 1 = n + \ell - 1$.
		Therefore we have $c_\lambda = 1$ so that $h + j \alpha + a_1 = 2a_\lambda$.
		This shows that $$\Apery(H,a_1) = \{0, a_2, a_3, \ldots , a_n\} \cup \{2 a_\lambda\}.$$
	\end{proof}

	\begin{prop}\label{j}
		Under the notation given in Lemma \ref{AperySec3}, assume that $\ell \ge 2$ and $n \ge 3$. We then have the following.
		\begin{enumerate}
			\item There exists $1 \le j \le n-1$ such that 
		$$\left\{\ell + n -1, \frac{h+ j\alpha + a}{\ell} \right\} \cup \{h + \alpha + a, h + 2 \alpha + a,\ldots , h+(n-1)\alpha + a,  \} \setminus \{h + j \alpha + a\}$$
		is the minimal system of generators of $H$. 
			\item The $j$ in the assertion (1) must be $1$ or $n-1$.
		\end{enumerate}
	\end{prop}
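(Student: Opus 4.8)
The plan is to make $\Apery(H,a_1)$ completely explicit and then to exploit that it is a complete residue system modulo $a_1$, together with the elementary Lemma \ref{elementary}. After the permutation of assertion (1), $H$ is minimally generated by $a_1$, by $a_\lambda:=(h+j\alpha+a_1)/\ell$, and by $b_i:=h+i\alpha+a_1$ for $1\le i\le n-1$ with $i\ne j$; in particular $\ell a_\lambda=b_j$. Since then $\{a_2,\dots,a_n\}=\{a_\lambda\}\cup\{b_i:i\ne j\}$, Lemma \ref{AperySec3} gives
$$
\Apery(H,a_1)=\{0,a_\lambda,2a_\lambda,\dots,(\ell-1)a_\lambda\}\ \cup\ \{b_1,b_2,\dots,b_{n-1}\},
$$
which consists of $a_1$ elements, one in each residue class modulo $a_1$. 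Reducing modulo $a_1$ and writing $c:=a_\lambda\bmod a_1$, this means that $\{0,c,2c,\dots,(\ell-1)c\}\cup\{h+\alpha,\dots,h+(n-1)\alpha\}$ is a complete residue system modulo $a_1$, and that $\ell c\equiv h+j\alpha\pmod{a_1}$.

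The heart of the argument is a coprimality dichotomy governed by $\ell=a_1-n+1$. If $\ell\le n-2$, then no prime $p$ can divide $\gcd(\alpha,a_1)$: otherwise $b_1,\dots,b_{n-1}$ would all be congruent modulo $p$, hence $n-1\le a_1/p\le a_1/2<n-1$, a contradiction; so $\gcd(\alpha,a_1)=1$. If $\ell\ge n-1$, then $0,a_\lambda,2a_\lambda,\dots,\ell a_\lambda$ are $\ell+1$ pairwise incongruent residues modulo $a_1$ (the first $\ell$ occur in the Apéry set above, and so does $\ell a_\lambda=b_j$), so $a_\lambda$ has order $\ge\ell+1\ge n$ modulo $a_1$, which forces $\gcd(a_\lambda,a_1)=1$. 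Thus in either case one of $\alpha$, $a_\lambda$ is a unit modulo $a_1$.

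Now I would multiply the complete residue system above by the inverse modulo $a_1$ of whichever of $\alpha$, $a_\lambda$ is a unit. This turns the corresponding chain into a block of consecutive residues, and since the complement of a block of consecutive residues in $\mathbb{Z}/a_1\mathbb{Z}$ is again such a block, the other chain becomes simultaneously an arithmetic progression and a block of $\ell$ (resp.\ $n-1$) consecutive residues. The dichotomy is exactly what ensures that this progression has at most $(a_1+1)/2$ terms, and then the counting behind Lemma \ref{elementary} forces its common difference to be $\equiv\pm1\pmod{a_1}$. Feeding $\pm1$ back into the (scaled) identity $\ell a_\lambda=b_j$ pins $b_j$ to be the first or the last term of the $\alpha$-progression, that is, $j=1$ or $j=n-1$, as claimed.

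The step I expect to cost the most care is precisely this dichotomy together with the wrap-around bookkeeping modulo $a_1$: one must check, separately in the cases $\ell\le n-2$ and $\ell\ge n-1$, that after scaling (and a harmless translation) the two blocks really assemble into a configuration $\{0,1,\dots,\ell\}\cup\{\ell+r,\ell+2r,\dots,\ell+(n-2)r\}$ to which Lemma \ref{elementary} applies, in particular that the common difference $r$ lands in the admissible range $1\le r\le n-2$. Everything else reduces to routine manipulation of Apéry sets.
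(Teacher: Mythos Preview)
Your argument is correct and works, but it takes a genuinely different route from the paper's.

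The paper's proof of part (2) is a short algebraic argument: assuming $2\le j\le n-2$, one has the identity $2\ell a_\lambda=b_{j-1}+b_{j+1}$ (in your notation), and the paper derives a contradiction by comparing the relation $X_\lambda^{2\ell}-X_{j-1}X_{j+1}\in I_H$ (which is a minimal generator of $I_H$, since it has a term of total degree two) with the fact that already $X_\lambda^{\ell+1}\in I_H+(X_1)$ because $\fkm^{\ell+1}=0$. No residue systems, no coprimality dichotomy, no Lemma \ref{elementary}; the single identity $2\ell a_\lambda=b_{j-1}+b_{j+1}$ and the stretched structure of $k[H]/(t^{a_1})$ suffice.

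Your route is purely arithmetic and essentially anticipates the machinery the paper deploys \emph{after} Proposition \ref{j}: you first prove a coprimality statement (a pre-cursor of Lemma \ref{gcd}, split by the dichotomy $\ell\le n-2$ versus $\ell\ge n-1$), then scale to make one chain a block of consecutives, and finally use a ``block lemma'' to force the other common difference to be $\pm 1$. This is sound: your dichotomy ensures the shorter chain has length $\le a_1/2$, and the elementary fact that an arithmetic progression of length $m$ with $2\le m\le a-2$ which coincides (as a set) with a block of $m$ consecutive residues in $\mathbb Z/a\mathbb Z$ must have common difference $\pm 1$ then does the job. From $\pm 1$, the identification $\ell a_\lambda\equiv h+j\alpha$ pins $j$ to $1$ or $n-1$, exactly as you say.

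Two points to flag. First, what you need is the block lemma just stated, not Lemma \ref{elementary} verbatim: you cannot arrange your scaled configuration into the exact shape $\{0,1,\dots,\ell\}\cup\{\ell+r,\dots,\ell+(n-2)r\}$ of Lemma \ref{elementary} without already knowing that the removed term $b_j$ sits at an end of the $\alpha$-chain, and you have no control that $r$ lands in $[1,n-2]$. So your phrase ``the counting behind Lemma \ref{elementary}'' is the right instinct, but you should state and prove the block lemma separately (the shift argument $|S\cap(S+d)|=m-1\Rightarrow d\equiv\pm 1$ is one line). Second, your approach buys you something: it is entirely numerical, avoids any discussion of minimal generators of $I_H$, and effectively merges Proposition \ref{j} with parts of Lemma \ref{gcd} and Theorems \ref{thm:j=1}--\ref{thm:j=n-1} into a single residue-system computation. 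The paper's approach, by contrast, is much shorter here but postpones the arithmetic to later.
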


	\begin{proof}
		(1) It follows directly from Lemma \ref{AperySec3}, as $H=\left<\Apery(H,a_1) \cup \{a_1\}\right>$.
		
		(2) Suppose $2 \le j \le n-2$. Define $a_{j+1} = \frac{h+ j\alpha + a}{\ell}$ and $a_{i+1} = h + i\alpha + a$ for $1 \le i \le n-1$ with $i \ne j$.
		Then $2 \ell a_{j+1} = a_j + a_{j+2}$. 
		Now, assume $a_j + a_{j+2} \in \Apery(H, a_1) = \{0, a_2, \ldots , a_n\} \cup \{2a_j, 3a_j,\ldots , \ell a_j\}$.
		This implies $a_j + a_{j+2} = q a_j$ for some $2 \le q \le \ell$, which leads to a contradiction since $2\ell = q \le \ell$.
		Hence, $a_j + a_{j+2} \notin \Apery(H, a_1)$.
		Therefore, $X_{j+1}^{2\ell} \in I_H + (X_1)$.
		Notice that it is part of a system of generators of $I_H + (X_1)$, because $X_{j+1}^{2\ell} - X_j X_{j+2}$ is part of a system of generators of $I_H$, which follows from the fact that the total degree of $X_j X_{j+2}$ is $2$.
		However, since $\fkm^{\ell+1} = (0)$ where $\fkm$ denotes the maximal ideal of $k[H]/(t^{a_1})$, we have $X_{j+1}^{\ell+1} \in I_H + (X_1)$.
		Thus, $2\ell \le \ell + 1$, which is a contradiction because $\ell \ge 2$.
		Therefore, $j = 1$ or $j=n-1$.
	\end{proof}

	\begin{lem}\label{gcd}
		Under the notation as in Proposition \ref{j}, the greatest common divisor of $a = \ell + n - 1$ and $b = \frac{h+j\alpha + a}{\ell}$ must be $1$.
	\end{lem}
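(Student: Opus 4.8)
The plan is to argue by contradiction: assuming some prime $p$ divides both $a=a_1$ and $b=a_\lambda$, I will contradict the fact that the Ap\'ery set $\Apery(H,a_1)$ is a complete set of residues modulo $a_1$.

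First I would record the explicit data coming from Lemma \ref{AperySec3} and Proposition \ref{j}. After the permutation there, $H$ is generated by $a=\ell+n-1$, by $b=a_\lambda$, and by the $n-2$ elements $a_\nu=h+i_\nu\alpha+a$ with $\{i_\nu\}=\{1,2,\dots,n-1\}\setminus\{j\}$, and one has the relation $\ell b=h+j\alpha+a$; moreover $\Apery(H,a)=\{0,b,2b,\dots,\ell b\}\cup\{a_\nu\mid\nu\ne\lambda\}$. Since subtracting $a$ from $\ell b$ and from each $a_\nu$ leaves residues modulo $a$ unchanged, the set
$$R=\{0,b,2b,\dots,(\ell-1)b\}\cup\{h+\alpha,h+2\alpha,\dots,h+(n-1)\alpha\}$$
has exactly $\ell+(n-1)=a$ elements and is again a complete set of residues modulo $a$. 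I expect this repackaging of the Ap\'ery set, together with the decision to look at a \emph{prime} divisor of $\gcd(a,b)$ rather than at $\gcd(a,b)$ itself, to be the only genuinely non-routine point; everything after it is bookkeeping.

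Now suppose $p$ is a prime with $p\mid a$ and $p\mid b$. Reducing $\ell b=h+j\alpha+a$ modulo $p$ gives $h+j\alpha\equiv 0\pmod p$, hence $h+i\alpha\equiv(i-j)\alpha\pmod p$ for every $i$. If $p\mid\alpha$ then also $p\mid h$, so $p$ divides every generator of $H$ (namely $a$, $b$, and each $a_\nu=h+i_\nu\alpha+a$), contradicting $\gcd(a_1,\dots,a_n)=1$. If $p\nmid\alpha$, I reduce $R$ modulo $p$: the first block lies entirely in the class of $0$ because $p\mid b$, while for each residue $c$ the set of $i\in\{1,\dots,n-1\}$ with $h+i\alpha\equiv c\pmod p$ forms a single residue class modulo $p$ (here we use $p\nmid\alpha$), so its cardinality $m_c$ is $\lfloor(n-1)/p\rfloor$ or $\lceil(n-1)/p\rceil$. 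As $R$ is a complete residue system modulo $a$ and $p\mid a$, each class modulo $p$ occurs exactly $a/p$ times in $R$; comparing the class of $0$ with a nonzero class yields $\ell+m_0=a/p=m_c$, hence $m_c-m_0=\ell\ge 2$, contradicting $|m_c-m_0|\le 1$. Either way we reach a contradiction, so $\gcd(a,b)=1$.

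The main obstacle, as indicated, is to see that one must first extract the "reduced" complete residue system $R$ from the Ap\'ery set and then pass to a prime $p\mid\gcd(a,b)$, exploiting $\ell b=h+j\alpha+a$ to normalize $h+i\alpha$ to $(i-j)\alpha$ modulo $p$; once this is set against the complete-residue-system property of $R$, the floor/ceiling count closes the argument immediately, using only $\ell\ge 2$.
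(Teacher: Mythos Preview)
Your proof is correct and takes a genuinely different route from the paper's. The paper argues with $d=\gcd(a,b)$ directly: using that the Ap\'ery set is a complete residue system modulo $a$, it counts how many of the generators $a_3,\dots,a_n$ lie in the cyclic subgroup $\langle\bar b\rangle\subseteq\mathbb{Z}/a\mathbb{Z}$ (namely $a/d-(\ell+1)$), handles the boundary case $a/d=\ell+1$ separately, and in the remaining case uses a B\'ezout--shift observation ($a_i\in\langle\bar b\rangle\Leftrightarrow a_{i+d}\in\langle\bar b\rangle$, via $d=ax+by$) to bound this count below by $\lfloor(n-2)/d\rfloor$, arriving at the contradiction $(1-d)\ell+1>0$. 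Your decision to pass instead to a \emph{prime} divisor $p$ of $\gcd(a,b)$ turns the estimate into an exact count: each class modulo $p$ is hit precisely $a/p$ times by the repackaged residue system $R$, and the identity $\ell b=h+j\alpha+a$ normalizes the arithmetic-progression block so that its distribution over classes modulo $p$ reduces to the trivial floor/ceiling estimate on $\{1,\dots,n-1\}$. This eliminates both the case split and the shift lemma, cleanly separates the degenerate case $p\mid\alpha$ (which immediately contradicts $\gcd(a_1,\dots,a_n)=1$), and invokes $\ell\ge 2$ only at the very last line; the result is a noticeably shorter and more transparent argument than the paper's.
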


	\begin{proof}
		Define $a_1 = a$, $a_2 = b$, and $$a_i = 
		\begin{cases}
 			h + (i-1) \alpha + a & (j=2)\\
 			h + (i-2) \alpha + a & (j= n-1)
		\end{cases}$$
		for $3 \le i\le n$. Let $d = \gcd(a,b)$ be the greatest common divisor of $a$ and $b$.
		Suppose $d \ge 2$.
		Then, $$\sharp \{0 \le p \le a-1 \mid \exists v \in \mathbb{Z}, p \equiv vb \pmod a\} = \frac{a}{d}.$$
		On the other hand, since $\mathbb{Z}/a\mathbb{Z} = \{\overline{q} \mid q \in \Apery(H, a_1)\} = \{\overline{a_3}, \overline{a_4},\ldots , \overline{a_n}\} \cup \{\overline{0}, \overline{b}, \overline{2b},\ldots , \overline{\ell b}\}$, we have 
		$$\sharp\{3 \le i \le n \mid \exists v \in \mathbb{Z}, a_i \equiv vb \pmod a\} = \frac{a}{d} - (\ell + 1).$$
		If $\frac{a}{d} = \ell +1$, then there is no $3 \le i \le n$ such that $a_i \equiv vb \pmod a$ for some $v \in \mathbb{Z}$.
		Hence, we would have $n-2 < d$, which is impossible because 
		$$
		n-2 = a - (\ell + 1) = d(\ell +1) - (\ell +1) = d + ((d-1) \ell -1) > d.
		$$
		Therefore, $\frac{a}{d} > \ell + 1$ and there at least one $3 \le i \le n$ such that $a_i \equiv vb \pmod a$ for some $v \in \mathbb{Z}$.

		For $3 \le i \le n-d$, if $a_i \equiv vb \pmod a$ for some $v \in \mathbb{Z}$, then $$a_{i+d} = a_i + d\alpha = a_i + (ax + by) \alpha \equiv (v+y)b \pmod a$$ where $d = ax + by$ for some $x,y \in \mathbb{Z}$.
		Furthermore, the converse is also true. 
		Hence, for $3 \le i \le n-d$, $a_i \equiv vb \pmod a$ for some $v \in \mathbb{Z}$ if and only if $a_{i+d} \equiv ub \pmod a$ for some $u \in \mathbb{Z}$.
		From this observation, we have
		 $$\sharp\{3 \le i \le n \mid \exists v \in \mathbb{Z}, a_i \equiv vb \pmod a\} \ge  \left[\frac{n-2}{d}\right],$$
		where $[*]$ denotes the Gauss symbol (floor function).

		Thus, $\frac{a}{d} - (\ell +1) \ge \left[\frac{n-2}{d}\right] > \frac{n-2}{d} - 1$.
		However, this implies $(1-d)\ell + 1 > 0$, which is impossible because $d \ge 2$ and $\ell \ge 2$. Therefore, $d = 1$.
	\end{proof}

	Now, we can prove Theorem \ref{thmSec3} by combining the following Theorems \ref{thm:j=1} and \ref{thm:j=n-1}.

	\begin{thm}\label{thm:j=1}
		Under the notation as in Proposition \ref{j}, suppose $j=1$. Put $a = \ell + n -1$ and $b = \frac{h + \alpha + a}{\ell}$. Then we have the following.
		\begin{enumerate}
			\item $h - (\ell -1) b \equiv 0 \pmod a$ and there exists $0 \le h_1 \in \mathbb{Z}$ such that $h = h_1 a + (\ell -1) b$.
			\item $I_H = \detid_2 \begin{pmatrix}
 				X_1^{h_1+1} & X_2^{\ell} & X_3 & \cdots & X_{n-1} & X_n\\
 				X_2 & X_3 & X_4 & \cdots & X_n & X_1^{(h_1 + 1)\ell + \alpha}
 			\end{pmatrix}$, where $a_1 = a$, $a_2 = b$, and $a_i = h+(i-1)\alpha + a$ for $3 \le i \le n$.
		\end{enumerate}
	\end{thm}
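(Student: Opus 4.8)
The plan is to handle the two assertions in turn, deriving (1) first since it supplies the arithmetic data needed for (2). For (1), I would first aim at the congruence $h \equiv (\ell-1)b \pmod{a}$. By Lemma~\ref{AperySec3} and the normalization in Proposition~\ref{j} (we are in the case $j=1$, so $\ell b = \ell a_\lambda = h+\alpha+a$, and we may assume $\ell \ge 2$), the Apéry set is $\Apery(H,a_1) = \{0, b, 2b, \dots, \ell b\} \cup \{a_3,\dots,a_n\}$; reducing modulo $a$ and using $\ell b \equiv h+\alpha$ and $a_i \equiv h+(i-1)\alpha$ for $3 \le i \le n$, this becomes the complete residue system $\{0,b,2b,\dots,\ell b\} \cup \{h+2\alpha,\dots,h+(n-1)\alpha\} \pmod a$. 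Since $\gcd(a,b)=1$ by Lemma~\ref{gcd}, I would multiply through by $b^{-1} \bmod a$ to obtain the complete residue system $\{0,1,\dots,\ell\} \cup \{\ell+r, \ell+2r, \dots, \ell+(n-2)r\} \pmod a$, where $r := \alpha b^{-1} \bmod a$ and I have used $(h+\alpha)b^{-1} \equiv \ell b\cdot b^{-1} \equiv \ell$. A short case check — if $r \ge n-1$ then $\ell + r$ reduces modulo $a$ into $\{0,1,\dots,\ell\}$, contradicting that this is a disjoint complete residue system — shows $1 \le r \le n-2$, and then Lemma~\ref{elementary} forces $r=1$. Hence $\alpha \equiv b$ and $h \equiv (\ell-1)b \pmod a$. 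For non-negativity of $h_1$, I would note that $(\ell-1)b \in \Apery(H,a_1)$, so $(\ell-1)b$ is the least element of $H$ in its residue class modulo $a_1$; since $h \in H$ and $h \equiv (\ell-1)b \pmod{a_1}$, we get $h \ge (\ell-1)b$, i.e. $h_1 := (h-(\ell-1)b)/a \in \mathbb{Z}_{\ge 0}$, which proves (1).

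For the inclusion $\detid_2(M) \subseteq I_H$ in (2), I would combine $h = h_1 a + (\ell-1)b$ with $\ell b = h+\alpha+a$ to get $b = (h_1+1)a+\alpha$, set $p = (h_1+1)\ell+\alpha$, and then check directly (using $a = \ell+n-1$ for the last equality) that
\[
a_2 - (h_1+1)a_1 \;=\; a_3 - \ell a_2 \;=\; a_{i+1} - a_i \;\;(3 \le i \le n-1) \;=\; p\,a_1 - a_n \;=\; \alpha .
\]
Thus all columns of $M$ share the common degree difference $\alpha$, so Corollary~\ref{commondiff} gives $\detid_2(M) \subseteq I_H$; in particular $\dim_k S/(\detid_2(M)+(X_1)) \ge \dim_k S/(I_H+(X_1)) = a_1$.

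It then remains to prove $\dim_k \overline{A} \le a_1$, where $\overline{A} := S/(\detid_2(M)+(X_1)) = k[X_2,\dots,X_n]/\detid_2(\overline{M})$ and $\overline{M}$ is $M$ specialized at $X_1 = 0$; Proposition~\ref{Nak} will then give $I_H = \detid_2(M)$. Expanding the $2\times 2$ minors of $\overline{M}$ produces, in $\overline{A}$, the relations $X_2 X_j = 0$ $(3 \le j \le n)$, $X_2^{\ell+1} = 0$, $X_j X_n = 0$ $(2 \le j \le n)$, $X_3 X_j = X_2^{\ell} X_{j+1}$ $(3 \le j \le n-1)$, and $X_{i+1}X_j = X_i X_{j+1}$ $(3 \le i < j \le n-1)$. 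Writing $\overline{\fkm}$ for the graded maximal ideal of $\overline{A}$, an induction on $k$ using these relations shows $X_k\overline{\fkm} = 0$ for all $3 \le k \le n$; hence $\overline{\fkm}^{2} = X_2\overline{\fkm}$, and from the relations one gets $\overline{\fkm}^{i}/\overline{\fkm}^{i+1} = k\cdot\overline{X_2^i}$ for $2 \le i \le \ell$ and $\overline{\fkm}^{\ell+1} = 0$. Since $\overline{\fkm}/\overline{\fkm}^2$ is spanned by the images of $X_2,\dots,X_n$, we get $\dim_k \overline{A} \le 1 + (n-1) + (\ell-1) = \ell+n-1 = a_1$, completing the argument.

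I expect part (1) — pinning down the residue of $h$ modulo $a$ — to be the main obstacle. The reduction to Lemma~\ref{elementary} is clean once set up, but it rests on three prior inputs combining just right: the precise shape of $\Apery(H,a_1)$ (Lemmas~\ref{Apery} and \ref{AperySec3}), the coprimality $\gcd(a,b)=1$ (Lemma~\ref{gcd}), and, crucially, the identity $\ell b \equiv h+\alpha \pmod a$, which is exactly what forces the arithmetic block $\{\ell+r,\ell+2r,\dots\}$ to begin at $\ell$ so that Lemma~\ref{elementary} applies. The monomial bookkeeping in the final step is routine but should be carried out with care near the boundary variables $X_2, X_3, X_{n-1}, X_n$.
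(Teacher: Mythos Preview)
Your proof is correct and follows essentially the same route as the paper: for (1) you reduce the Ap\'ery set modulo $a$, use $\gcd(a,b)=1$ to land in the setting of Lemma~\ref{elementary}, and then read off $\alpha\equiv b$; for (2) you verify the common column difference $\alpha$ via Corollary~\ref{commondiff} and finish with a dimension count and Proposition~\ref{Nak}. The only cosmetic difference is that the paper computes $J+(X_1)$ directly as the monomial ideal $(X_1,X_2^{\ell+1})+(X_iX_j:(i,j)\ne(2,2))$, whereas you reach the same bound $\dim_k\overline{A}\le a_1$ by showing $X_k\overline{\fkm}=0$ for $k\ge 3$ and hence $\overline{\fkm}^2=(X_2^2)$.
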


	\begin{proof}
		(1) 	Since $\gcd(a,b) = 1$ by Lemma \ref{gcd}, $\{0, b, 2b,\ldots , \ell b\} \cup \{ \ell b + \alpha, \ell b  + 2 \alpha, \ldots , \ell b + (n-2)\alpha\}$ and $\{0, b, 2b, \ldots, (\ell+n-2)b\}$ are complete systems of representatives modulo $a$.
		Hence, we have
		$$
		\{\overline{(\ell+1)b}, \overline{(\ell+2)b}, \ldots , \overline{(\ell+n-2)b}\} = \{\overline{\ell b + \alpha}, \overline{\ell b + 2\alpha}, \ldots , \overline{\ell b + (n-2)\alpha}\}
		$$
		in $\mathbb{Z} / a\mathbb{Z}$.
		Therefore $\ell b + \alpha \equiv (\ell + r) b \pmod a$ for some $1 \le r \le n-2$, whence $\alpha \equiv rb \pmod a$.
		Thus, $\{0,b,2b,\ldots , \ell b\} \cup \{(\ell +r)b, (\ell+2r)b, \ldots , (\ell+(n-2)r)b\}$ is a complete system of representatives modulo $a$.
		By $\gcd(a,b) = 1$ again, we have $\{0,1,2,\ldots , \ell\}\cup\{\ell+r, \ell+2r,\ldots , \ell+(n-2)r\}$ is a complete system of representatives modulo $a$.
		Thus, thanks to Lemma \ref{elementary}, we conclude that $r=1$, and therefore $\alpha \equiv b \pmod a$.
		
		Now, since $\ell b = h + \alpha + a$, it follows that $h - (\ell-1) b = b - \alpha + a \equiv 0$.
		Hence $h = h_1 a + (\ell-1) b$ for some $h_1 \in \mathbb{Z}$. 
		Finally, note that $h \in H$ and $\ell-1 < a$. Thus, $h_1$ must be non-negative, and we get the desired conclusion.

		(2) Put $J=\detid_2
	\left(\begin{smallmatrix}
		X_1^{h_1+1} & X_2^\ell & X_3 & \cdots &X_{n-1} & X_n\\
		X_2 & X_3 & X_4 & \cdots & X_n & X_1^{(h_1+1)\ell + \alpha}
	\end{smallmatrix}\right)$. 
	Then, 
	\begin{eqnarray*}
		J+(X_1) &=& \detid_2
		\begin{pmatrix}
			0 & X_2^\ell & X_3 & \cdots &X_{n-1} & X_n\\
			X_2 & X_3 & X_4 & \cdots & X_n & 0
		\end{pmatrix} + (X_1)\\
		&=& (X_1, X_2^{\ell+1}) + \left(X_iX_j \mid 2\le i\le j \le n, (i,j) \ne (2,2)\right).
	\end{eqnarray*}
	Hence, $\dim_k S/(J+(X_1)) = \ell + n -1 = a_1$. Therefore, thanks to Proposition \ref{Nak} and Corollary \ref{commondiff}, it is enough to show that $a_2 - (h_1+1)a_1 = ((h_1+1)\ell + \alpha)a_1 - a_n = \alpha$.
	Since $b = \frac{1}{\ell}((h_1+1)a + (\ell-1)b + \alpha)$, we have $b = (h_1 + 1)a + \alpha$. Hence $a_2 - (h_1+1)a_1 = b-(h_1+1)a = \alpha$. 
	On the other hand, the direct computation shows that $((h_1+1) \ell + \alpha)a_1 - a_n = \alpha$. 
	Thus, we have $I_H = J$.
	\end{proof}

	\begin{thm}\label{thm:j=n-1}
		Under the notation as in Proposition \ref{j}, suppose $j=n-1$. Put $a = \ell + n -1$ and $b = \frac{h + (n-1)\alpha + a}{\ell}$. Then we have the following.
		\begin{enumerate}
			\item $b + \alpha \equiv 0 \pmod a$.
			\item $h \equiv 0 \pmod a$.
			\item Put $h_1 = \frac{h}{a}$. Then $\frac{b+\alpha}{a} = \frac{h_1 + 1 + \alpha}{\ell}$.
			\item $I_H = \detid_2 \begin{pmatrix}
 				X_1^{h_1+1} & X_2 & \cdots & X_{n-1} & X_n\\
 				X_2 & X_3 & \cdots & X_n^{\ell} & X_1^{\frac{h_1+1+\alpha}{\ell}}
 			\end{pmatrix}$, where $a_1 = a$, $a_n = b$, and $a_i = h+(i-1)\alpha + a$ for $2 \le i \le n-1$.
		\end{enumerate}
	\end{thm}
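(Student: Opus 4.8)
The plan is to follow the proof of Theorem~\ref{thm:j=1} step by step, adapting the modular arithmetic to the case $j=n-1$. I would first pin down the Ap\'ery set. Write $a=a_1=\ell+n-1$. By Lemma~\ref{AperySec3} we have $\Apery(H,a)=\{0,a_2,\ldots,a_n\}\cup\{2a_\lambda,\ldots,\ell a_\lambda\}$ for some $\lambda$. Since $\ell b=h+(n-1)\alpha+a\in\Apery(H,a)$ (because $h+(n-1)\alpha\in\PF(H)$) and $\ell\ge 2$, all of $b,2b,\ldots,\ell b$ lie in $\Apery(H,a)$; as $b$ is a minimal generator while $2b,\ldots,\ell b$ are not, comparison gives $\{2b,\ldots,\ell b\}=\{2a_\lambda,\ldots,\ell a_\lambda\}$ and hence $a_\lambda=b$, so $\lambda=n$ in the labelling $a_n=b$, $a_i=h+(i-1)\alpha+a$ $(2\le i\le n-1)$ of the statement. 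Thus $\Apery(H,a)=\{0,a_2,\ldots,a_{n-1}\}\cup\{b,2b,\ldots,\ell b\}$ with $a_i\equiv h+(i-1)\alpha\pmod a$; using $\ell b\equiv h+(n-1)\alpha$ to rewrite $a_i\equiv\ell b-(n-i)\alpha\pmod a$, the Ap\'ery residues form the complete residue system $\{0,b,2b,\ldots,\ell b\}\cup\{\ell b-\alpha,\ell b-2\alpha,\ldots,\ell b-(n-2)\alpha\}$ modulo $a$. Since $\gcd(a,b)=1$ by Lemma~\ref{gcd}, $\{0,b,2b,\ldots,(\ell+n-2)b\}$ is another complete residue system, and cancelling the common block $\{0,b,\ldots,\ell b\}$ gives
$$
\{(\ell+1)b,(\ell+2)b,\ldots,(\ell+n-2)b\}=\{\ell b-\alpha,\ell b-2\alpha,\ldots,\ell b-(n-2)\alpha\}
$$
in $\Z/a\Z$. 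In particular $\ell b-\alpha\equiv(\ell+r)b\pmod a$ for some $1\le r\le n-2$, so $\alpha\equiv -rb\pmod a$; multiplying the displayed complete residue system by $b^{-1}$ shows that $\{0,1,\ldots,\ell\}\cup\{\ell+r,\ell+2r,\ldots,\ell+(n-2)r\}$ is a complete system of representatives modulo $a=\ell+n-1$, whence $r=1$ by Lemma~\ref{elementary}. This is (1), $b+\alpha\equiv0\pmod a$. Reducing $\ell b=h+(n-1)\alpha+a$ modulo $a$ and substituting $\alpha\equiv-b$ gives $h\equiv\ell b+(n-1)b=(\ell+n-1)b\equiv0\pmod a$, which is (2); and with $h=h_1a$ we get $\ell b=(h_1+1)a+(n-1)\alpha$, hence $\ell(b+\alpha)=(h_1+1+\alpha)a$, so $\tfrac{b+\alpha}{a}=\tfrac{h_1+1+\alpha}{\ell}$ is a positive integer, which is (3).

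For (4), set $p=\tfrac{h_1+1+\alpha}{\ell}=\tfrac{b+\alpha}{a}$ and $J=\detid_2(M)$ for the matrix $M$ of the statement. The inclusion $J\subseteq I_H$ follows from Corollary~\ref{commondiff}, since each of $a_2-(h_1+1)a_1$, $a_i-a_{i-1}$ $(3\le i\le n-1)$, $\ell a_n-a_{n-1}$, $pa_1-a_n$ equals $\alpha$ --- the first because $h=h_1a$, the third by the definition of $b$, the last because $pa=b+\alpha$. For the colength hypothesis of Proposition~\ref{Nak}, reduce modulo $X_1$: $M$ becomes $\left(\begin{smallmatrix}0&X_2&\cdots&X_{n-1}&X_n\\ X_2&X_3&\cdots&X_n^{\ell}&0\end{smallmatrix}\right)$, and I would verify that
$$
J+(X_1)=(X_1)+\left(X_iX_j\mid 2\le i\le j\le n,\ (i,j)\ne(n,n)\right)+(X_n^{\ell+1}).
$$
The inclusion $\subseteq$ is immediate, since every entry-product of the reduced matrix already lies in the right-hand ideal; for $\supseteq$, the minors involving column $1$ give the monomials $X_2X_j$ $(2\le j\le n)$, those involving column $n$ give $X_nX_j$ $(2\le j\le n-1)$ and $X_n^{\ell+1}$, and then an induction on $i$ using the minors $X_iX_{j+1}-X_jX_{i+1}$ and $X_iX_n^{\ell}-X_{n-1}X_{i+1}$ shows each of $X_2,\ldots,X_{n-1}$ is annihilated modulo $J+(X_1)$ by $(X_2,\ldots,X_n)$, killing every remaining quadric. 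Consequently $S/(J+(X_1))$ has $k$-basis $\{1,X_2,\ldots,X_{n-1},X_n,X_n^2,\ldots,X_n^{\ell}\}$ of cardinality $n+\ell-1=a_1$, and Proposition~\ref{Nak} gives $I_H=J$.

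I expect the main obstacle to be part (1): one has to line up the two complete residue systems and, crucially, produce the exponent $r$ inside the range $\{1,\ldots,n-2\}$ where Lemma~\ref{elementary} applies, which is exactly what the normalization $a_i\equiv\ell b-(n-i)\alpha$ accomplishes. Once (1) is in hand, (2) and (3) are short identities, and (4) runs parallel to Theorem~\ref{thm:j=1}, the only genuinely new point there being the elementary (but somewhat longer) verification that modulo $X_1$ every quadric except the powers of $X_n$ vanishes.
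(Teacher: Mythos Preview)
Your proposal is correct and follows essentially the same approach as the paper's proof, which is extremely terse: for (1) the paper simply records the two complete residue systems and writes ``using similar considerations as in the proof of Theorem~\ref{thm:j=1}, we have $b\equiv-\alpha\pmod a$,'' and for (4) it says only ``by a similar computation as in the proof of Theorem~\ref{thm:j=1}.'' You have supplied exactly those omitted details---the identification of $a_\lambda=b$, the passage via Lemma~\ref{elementary} to $r=1$, and the explicit description of $J+(X_1)$ with its $k$-basis count---so your write-up is a faithful and more complete version of the intended argument.
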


	\begin{proof}
		(1)	Since $\gcd(a,b) = 1$ by Lemma \ref{gcd}, the sets $\{0, b, 2b,\ldots , \ell b\} \cup \{ \ell b - \alpha, \ell b  - 2 \alpha, \ldots , \ell b - (n-2)\alpha\}$ and $\{0, b, 2b, \ldots, (\ell+n-2)b\}$ are complete systems of representatives modulo $a$.
		Using similar considerations as in the proof of Theorem \ref{thm:j=1}, we have $b \equiv -\alpha \pmod a$.
		
		(2) + (3) Since $\ell b = h + (n-1)\alpha + a \equiv - \ell \alpha \pmod a$, we have $h + (n+\ell-1)\alpha + a \equiv h \equiv 0 \pmod a$ (Notice that $n+\ell -1 = a$). 
		Thus, we can write $h = h_1 a$ for some $h_1 \ge 0$.
		
		Now, observe that $\ell(b+\alpha) = (h_1 + 1)a + (\ell + n -1)\alpha = (h_1 + 1 + \alpha) a$. Therefore 
		$$
		\frac{b+\alpha}{a} = \frac{\ell(b+\alpha)}{\ell a} = \frac{h_1 + 1 + \alpha}{\ell}.
		$$
		
		(4) By a similar computation as in the proof of Theorem \ref{thm:j=1}, we obtain
		$$
		I_H = \detid_2
		\begin{pmatrix}
			X_1^{h_1+1} & X_2 & \cdots & X_{n-1} & X_n\\
			X_2 & X_3 & \cdots & X_n^\ell & X_1^{\frac{h_1+1+\alpha}{\ell}}	
		\end{pmatrix}.
		$$
	\end{proof}

\section{Cohen-Macaulayness of $\gr(k[H])$}

In this section, we explore the Cohen-Macaulay property of the tangent cone $\gr(k[H]) = \bigoplus_{n \ge 0} \fkm^n/\fkm^{n+1}$ of the numerical semigroup ring $k[H]$, when $k[H]/(t^{a_1})$ is stretched, where $\fkm$ denotes the graded maximal ideal of $k[H]$.

Throughout this section, let $H$ be a numerical semigroup satisfying the condition in Theorem \ref{thm:j=1} or Theorem \ref{thm:j=n-1}.
In each case, $\fkm^{\ell+1} \subseteq (t^{a_1})$ and $\fkm^\ell  \not\subseteq (t^{a_1})$. 
Therefore, it is known by \cite[Corollary 2.4]{sally1} that $\gr(k[H])$ is Cohen-Macaulay if and only if $t^{(\ell + 1) b} \in t^{a_1} \fkm^{\ell}$.
Notice that, $k[H] \cong k[X_1, X_2,\ldots , X_n]/I_H$ where $I_H$ is the same form as in Theorem \ref{thm:j=1} (resp. Theorem \ref{thm:j=n-1}) when $j=1$ (resp. $j=n-1$).

\begin{thm}\label{gr:j=1}
	Under the notation as in Theorem \ref{thm:j=1}, $\gr(k[H])$ is Cohen-Macaulay if and only if $h_1 +1 \ge \ell$.
\end{thm}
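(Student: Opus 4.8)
The plan is to reduce the assertion to a single ``order'' computation in the numerical semigroup $H$ and then carry it out. Recall from the discussion opening this section that, since $\fkm^{\ell+1}\subseteq(t^{a_1})$ and $\fkm^{\ell}\not\subseteq(t^{a_1})$, Sally's criterion \cite[Corollary 2.4]{sally1} gives: $\gr(k[H])$ is Cohen-Macaulay if and only if $t^{(\ell+1)b}\in t^{a_1}\fkm^{\ell}$. Now $\fkm^{\ell}$ is spanned as a $k$-vector space by the monomials $t^{u}$ with $u\in H$ expressible as a sum of at least $\ell$ minimal generators of $H$, so $t^{a_1}\fkm^{\ell}$ is spanned by the $t^{a_1+u}$ with such $u$; hence the criterion is equivalent to $\operatorname{ord}_H\!\bigl((\ell+1)b-a_1\bigr)\ge\ell$, where $\operatorname{ord}_H$ denotes the maximal number of minimal generators occurring in a representation. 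The first step is therefore to write $(\ell+1)b-a_1$ in a usable form: the $2\times 2$ minor of the first two columns of the matrix in Theorem \ref{thm:j=1} is $X_1^{h_1+1}X_3-X_2^{\ell+1}\in I_H$, which forces $(h_1+1)a_1+a_3=(\ell+1)a_2=(\ell+1)b$, so that $(\ell+1)b-a_1=h_1a_1+a_3$. Thus the theorem amounts to proving $\operatorname{ord}_H(h_1a_1+a_3)\ge\ell\iff h_1+1\ge\ell$.

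One implication is immediate: the expression $h_1a_1+a_3$ visibly uses $h_1+1$ minimal generators, so $\operatorname{ord}_H(h_1a_1+a_3)\ge h_1+1$, and if $h_1+1\ge\ell$ then $\gr(k[H])$ is Cohen-Macaulay. For the converse I intend to prove the sharper claim that $h_1a_1+a_3$ has a \emph{unique} representation as a sum of minimal generators of $H$, namely $h_1\cdot a_1+1\cdot a_3$; this gives $\operatorname{ord}_H(h_1a_1+a_3)=h_1+1$, so when $h_1+1<\ell$ the criterion fails.

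To establish the uniqueness I will use the rigid shape of the generators forced by the stretched hypothesis (Theorem \ref{thm:j=1} together with $b=(h_1+1)a+\alpha$): setting $a=\ell+n-1$, one has $a_1=a$, $a_2=(h_1+1)a+\alpha$, and $a_i=\ell(h_1+1)a+(\ell+i-2)\alpha$ for $3\le i\le n$. Given any representation $\sum_{i=1}^n c_ia_i=h_1a_1+a_3$, collect the coefficients of $a$ and of $\alpha$ and rewrite the identity as $(A-P)a=(Q-W)\alpha$, where $A=c_1+(h_1+1)c_2+\ell(h_1+1)\sum_{i\ge 3}c_i$, $W=c_2+\sum_{i\ge 3}(\ell+i-2)c_i$, $P=\ell(h_1+1)+h_1$ and $Q=\ell+1$. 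Because $\gcd(a,\alpha)=\gcd(a,b)=1$ by Lemma \ref{gcd}, there is an integer $m$ with $Q-W=am$ and $A-P=\alpha m$. Then $W\ge 0$ together with $a=\ell+n-1\ge\ell+2$ forces $m\le 0$; next $A\le P$ together with $A\ge\ell(h_1+1)\sum_{i\ge 3}c_i$ forces $\sum_{i\ge 3}c_i\le 1$; and a short case split on whether this sum is $1$ or $0$ — using nonnegativity of $c_1,c_2$, the admissible range $3\le i\le n$ of the unique large generator that can appear, and the relation $i_0=3-am$ read off from the $\alpha$-coefficient in the first case — eliminates every possibility except $c_1=h_1$, $c_3=1$, and all other $c_i=0$.

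The main obstacle is precisely this uniqueness step. A crude size estimate is useless here, since the generators $a_i$ with $i\ge 3$ are large and $(h_1a_1+a_3)/a_1$ can greatly exceed $h_1+1$; the argument genuinely needs the coprimality $\gcd(a,b)=1$ and the stretched normal form of the $a_i$ to pin down the coefficients, so the only real work is organizing this Diophantine bookkeeping cleanly. Once the uniqueness is in hand, $\operatorname{ord}_H(h_1a_1+a_3)=h_1+1$, and the equivalence $\operatorname{ord}_H(h_1a_1+a_3)\ge\ell\iff h_1+1\ge\ell$ finishes the proof.
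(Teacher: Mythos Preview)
Your proposal is correct and follows the same strategy as the paper's proof: reduce via Sally's criterion to deciding whether $t^{(\ell+1)b-a_1}\in\fkm^\ell$, use the first $2\times 2$ minor to identify $(\ell+1)b-a_1=h_1a_1+a_3$, and then argue that this factorization is unique so that $\operatorname{ord}_H(h_1a_1+a_3)=h_1+1$. The paper asserts uniqueness in one sentence (``there are no other expressions of $(\ell+1)b$ in $H$ by the form of the defining ideal''), implicitly by checking that no generating binomial of $I_H$ can be applied to $X_1^{h_1}X_3$; your explicit Diophantine argument via $\gcd(a,\alpha)=1$ is a more detailed, but equivalent, route to the same conclusion.
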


\begin{proof}
	By considering the form of the defining ideal given in Theorem \ref{thm:j=1}, we immediately get that $(\ell+1) b = (h_1+1) a_1 + a_3$.
	Furthermore, we know that there are no other expressions of $(\ell+1)b$ in $H$ by the form of the defining ideal again.
	Therefore, $t^{(\ell + 1)b -a} = t^{h_1 a_1 + a_3} \in \fkm^\ell$ if and only if $h_1+1 \ge \ell$.
\end{proof}

\begin{thm}\label{gr:j=n-1}
	Under the notation as in Theorem \ref{thm:j=n-1}, we have the following.
	\begin{enumerate}
		\item 	$\gr(k[H])$ is Cohen-Macaulay if and only if $\frac{h_1 + 1 + \alpha}{\ell} \ge \ell$.
		\item If $h_1 \ge \ell^2 - \ell - \alpha$, then $\gr(k[H])$ is Cohen-Macaulay. 
		\item If $\ell = 2$, then $\gr(k[H])$ is Cohen-Macaulay.
	\end{enumerate}
\end{thm}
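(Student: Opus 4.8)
The plan is to follow the same route as in the proof of Theorem~\ref{gr:j=1}, via the criterion recalled at the start of this section: $\gr(k[H])$ is Cohen-Macaulay if and only if $t^{(\ell+1)b}\in t^{a_1}\fkm^{\ell}$, equivalently $t^{(\ell+1)b-a_1}\in\fkm^{\ell}$. Write $p=\tfrac{h_1+1+\alpha}{\ell}$; this is a positive integer, and by Theorem~\ref{thm:j=n-1}(3) it equals $\tfrac{b+\alpha}{a}$, so that $p\,a_1=a_n+\alpha$. The minor of the $(n-1)$-st and $n$-th columns of the matrix in Theorem~\ref{thm:j=n-1}(4) is $X_{n-1}X_1^{p}-X_n^{\ell+1}\in I_H$, which yields the identity $(\ell+1)b=a_{n-1}+p\,a_1$ in $H$. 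In particular $(\ell+1)b\equiv a_{n-1}\pmod{a_1}$; since $\Apery(H,a_1)$ is a complete system of representatives modulo $a_1$ containing $a_{n-1}$, while $(\ell+1)b=a_{n-1}+p\,a_1>a_{n-1}$, we get $(\ell+1)b\notin\Apery(H,a_1)$, so $(\ell+1)b-a_1\in H$.

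For part (1), I would argue, exactly as in the proof of Theorem~\ref{gr:j=1} and using the explicit form of $I_H$, that $X_n^{\ell+1}$ and $X_{n-1}X_1^{p}$ are the only monomials of $S$ mapping onto $t^{(\ell+1)b}$. Consequently $(\ell+1)b-a_1$ has the single expression $a_{n-1}+(p-1)a_1$ in $H$, realized by $X_{n-1}X_1^{p-1}$, of total degree $p$; hence $t^{(\ell+1)b-a_1}\in\fkm^{\ell}$ if and only if $p\ge\ell$, which is the asserted criterion. I expect this monomial-uniqueness step — ruling out any strictly longer expansion of $(\ell+1)b$, and thus the one place where the precise structure of $I_H$ is genuinely needed — to be the main obstacle; it should be handled by the same bookkeeping with $\Apery(H,a_1)$ used in Lemma~\ref{AperySec3} and in the proof of Theorem~\ref{thm:j=n-1}, since modulo $a_1$ the number in question lies in the residue class of $a_{n-1}\in\Apery(H,a_1)$.

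For part (2), the hypothesis $h_1\ge\ell^2-\ell-\alpha$ gives $h_1+1+\alpha\ge\ell^2-\ell+1>\ell(\ell-1)$; since $\ell\mid h_1+1+\alpha$ and the least multiple of $\ell$ exceeding $\ell(\ell-1)$ is $\ell^2$, we get $h_1+1+\alpha\ge\ell^2$, i.e. $p\ge\ell$, so $\gr(k[H])$ is Cohen-Macaulay by part (1). For part (3), when $\ell=2$ the criterion of part (1) reads $p\ge2$; but $p\,a_1=a_n+\alpha$ and $a_n>a_1$ (as $a_1$ is the multiplicity), so $p\,a_1=a_n+\alpha>a_1$ forces $p\ge2$, and hence $\gr(k[H])$ is Cohen-Macaulay.
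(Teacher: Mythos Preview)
Your plan for part~(1) has a genuine gap: the uniqueness you aim to prove does not hold in general. From the minor of columns $1$ and $n-2$ of the matrix in Theorem~\ref{thm:j=n-1}(4) one gets $(h_1+1)a_1 + a_{n-1} = a_2 + a_{n-2}$, so whenever $p \ge h_1+2$ (equivalently $\alpha \ge (\ell-1)(h_1+1)+\ell$) the element $(\ell+1)b - a_1 = (p-1)a_1 + a_{n-1}$ also equals $(p-h_1-2)a_1 + a_2 + a_{n-2}$, and the monomial $X_1^{p-h_1-2}X_2X_{n-2}$ is a third monomial mapping onto $t^{(\ell+1)b-a_1}$. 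Thus the Ap\'ery bookkeeping you propose cannot establish uniqueness, because uniqueness is simply false here; this is precisely where the $j=n-1$ case differs from the $j=1$ case. The paper does not claim uniqueness. It observes instead that any alternative expression, obtained via a relation such as $X_1^{h_1+1}X_{n-1} \equiv X_2X_{n-2} \pmod{I_H}$, has total degree \emph{at most} that of $X_1^{p-1}X_{n-1}$ (in the example above the total degree drops from $p$ to $p-h_1$). Hence $X_1^{p-1}X_{n-1}$ realises the \emph{maximal} total degree $p$ among all expressions, and this maximality is what gives $t^{(\ell+1)b-a_1}\in\fkm^\ell \Longleftrightarrow p\ge\ell$.

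Parts~(2) and~(3) are correct. Part~(2) coincides with the paper's argument. Your part~(3) is in fact cleaner than the paper's: the paper splits into the cases $h_1\ge 1$ and $h_1=0$ (separately ruling out $\alpha=1$ in the latter via $a_n<a_1$), whereas your single inequality $p\,a_1 = a_n + \alpha > a_1$ forces $p\ge 2$ in one stroke.
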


\begin{proof}
	(1) By Theorem \ref{thm:j=n-1}, we have $(\ell+1) b = \frac{h_1+1+\alpha}{\ell} a_1 + a_{n-1}$.
	We may even find other expressions by using the relation $X_1^{h_1+1} X_{n-1} \equiv X_2 X_{n-2} \mod I_H$, the total degree of the left-hand side is greater than or equal to that of the right-hand side.
	Hence, $t^{(\ell+1) b - a} \in \fkm^{\ell}$ if and only if $\frac{h_1 + 1 + \alpha}{\ell} \ge \ell$.
	
	(2) Suppose $h_1 \ge \ell^2 -\ell -\alpha$. Then $h_1 + 1 + \alpha \ge \ell(\ell -1) + 1$. Since $\frac{h_1 + 1 + \alpha}{\ell} \in \mathbb{Z}$, it is at least $\ell$. Therefore, in this case, $\gr(k[[H]])$ is Cohen-Macaulay thanks to (1).
	
	(3) If $h_1 \ge 1$, then $h_1 + 1 + \alpha \ge 3$. Hence, since $\ell = 2$ and $\frac{h_1 +1 + \alpha}{\ell} \in \mathbb{Z}$, we have $\frac{h_1 + 1 + \alpha}{\ell} \ge 2 = \ell$. By (1), we conclude that $\gr(k[H])$ is Cohen-Macaulay.
	
	On the other hand, suppose $h_1 = 0$.
	If $\alpha = 1$, then $a_n = \frac{a + (n-1)}{2} < a$. 
	However, it is impossible because we assume that $a$ is the multiplicity of $H$.
	Hence $\alpha \ge 2$. Then $h_1 + 1 + \alpha = 1 + \alpha \ge 3$ and this is even. Hence $\frac{h_1 + 1 + \alpha}{\ell} \ge 2 = \ell$. Hence $\gr(k[H])$ is Cohen-Macaulay again by (1).
\end{proof}

\begin{cor}
	Under the notation as in Proposition \ref{j}, we further assume that $H$ has almost maximal embedding dimension, that is, $\ell = 2$. Then $\gr(k[H])$ is NOT Cohen-Macaulay if and only if $j=1$ and $h = b = a+\alpha$.
\end{cor}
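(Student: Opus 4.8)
The plan is to reduce everything to the two cases $j=1$ and $j=n-1$ furnished by Proposition \ref{j}(2), and then to read off the answer from Theorems \ref{gr:j=1} and \ref{gr:j=n-1} together with the explicit arithmetic recorded in Theorem \ref{thm:j=1}.

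First I would dispose of the case $j=n-1$: by Theorem \ref{gr:j=n-1}(3), when $\ell=2$ the tangent cone $\gr(k[H])$ is always Cohen-Macaulay. Hence, if $\gr(k[H])$ fails to be Cohen-Macaulay, we must be in the case $j=1$; this already accounts for half of the claimed characterization.

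Next I would treat the case $j=1$, keeping the notation $a=\ell+n-1$ and $b=\frac{h+\alpha+a}{\ell}$ of Theorem \ref{thm:j=1}. By Theorem \ref{gr:j=1}, $\gr(k[H])$ is Cohen-Macaulay if and only if $h_1+1\ge\ell$, so with $\ell=2$ the ring $\gr(k[H])$ is \emph{not} Cohen-Macaulay precisely when $h_1=0$. It then remains to translate the condition $h_1=0$ into the equalities $h=b=a+\alpha$. For this I would use Theorem \ref{thm:j=1}(1), which for $\ell=2$ reads $h=h_1a+b$, so that $h-b=h_1a$ and, since $a>0$, the vanishing $h_1=0$ is equivalent to $h=b$. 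Finally, the computation carried out inside the proof of Theorem \ref{thm:j=1}(2) gives $b=(h_1+1)a+\alpha$; substituting $h_1=0$ yields $b=a+\alpha$, and conversely $h=b=a+\alpha$ forces $h-b=0=h_1a$, hence $h_1=0$. Combining the two cases yields the stated equivalence.

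The argument is essentially bookkeeping, so I do not expect a genuine obstacle. The one point that requires a little care is that the corollary is phrased "under the notation of Proposition \ref{j}", so both branches $j=1$ and $j=n-1$ are a priori allowed, and the branch $j=n-1$ must be explicitly ruled out via Theorem \ref{gr:j=n-1}(3) rather than silently discarded; the other mild subtlety is correctly extracting the identity $b=(h_1+1)a+\alpha$ from the proof of Theorem \ref{thm:j=1}(2) rather than from its statement.
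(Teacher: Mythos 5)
Your proposal is correct and is essentially the intended argument: the paper states the corollary without proof precisely because it follows by combining Theorem \ref{gr:j=n-1}(3) (ruling out non-Cohen-Macaulayness when $j=n-1$ and $\ell=2$) with Theorem \ref{gr:j=1} and the identities $h=h_1a+(\ell-1)b$ and $b=(h_1+1)a+\alpha$ from Theorem \ref{thm:j=1}, exactly as you do. Your translation of $h_1=0$ into $h=b=a+\alpha$ (and back) is the right bookkeeping, so no gap remains.
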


\section{Examples}

In the last section of this paper, we guarantee that examples satisfying the assumptions of our results exist by providing concrete examples.
In this section, $k$ denotes an arbitrary field the same as in previous sections.

\begin{ex}\label{ex1}
	Let $ \ell \geq 2 $, $ n \geq 3 $, $ \alpha > 0 $, and $ h_1 \geq 0 $ be integers. Put $ a = \ell + n - 1 $ and assume that $ \gcd(a, \alpha) > 1 $. 
	\begin{enumerate}
		\item Let $ b = (h_1 + 1)a + \alpha $. Then, the numerical semigroup 
		$$
		H = \langle a, b, \ell b + \alpha, \ell b + 2\alpha, \ldots, \ell b + (n - 2)\alpha \rangle
		$$
		satisfies the assumptions of Theorem \ref{thm:j=1}.
		
		\item Suppose that $ \frac{h_1 + 1 + \alpha}{\ell} \in \mathbb{Z} $ and $ \frac{h_1 + 1 + \alpha}{\ell} a - \alpha > a $. Let $ b = \frac{h_1 + 1 + \alpha}{\ell} a - \alpha $. Then, the numerical semigroup 
		$$
		H = \langle a, \ell b - (n - 2)\alpha, \ell b - (n - 3)\alpha, \ldots, \ell b - \alpha, b \rangle
		$$
		satisfies the assumptions of Theorem \ref{thm:j=n-1}.
	\end{enumerate}
\end{ex}


\begin{ex}
Let $H=\left<a_1 = 6, a_2 = 13, a_3=40, a_4 = 41\right>$, which is the case (1) of Example \ref{ex1} where $\ell=3$, $n=4$, $\alpha = 1$, and $h_1 = 1$. 
	Then we have the following.

	\begin{enumerate}
		\item $\Apery(H,6)=\{0,13,40,41\}\cup\{26,39\}$. Hence $k[H]/(t^6)$ is stretched.
		\item  $\PF(H) = \{33,34,35\}$.
		\item $I_H = \detid_2 \begin{pmatrix}
	X_1^2 & X_2^3 & X_3 & X_4\\
	X_2 & X_3 & X_4 & X_1^7
\end{pmatrix}$.
		\item Since $h_1 + 1 = 1 < 3 = \ell$, $\gr(k[H])$ is not Cohen-Macaulay by Theorem \ref{gr:j=1}.
	\end{enumerate}
\end{ex}

\begin{ex}
	Let $H=\left<a_1 = 7, a_2 = 39, a_3 = 43, a_4 = 47, a_5 = 17\right>$, which is the case (2) of Example \ref{ex1} where $\ell =3$, $n=5$, $\alpha = 4$, and $h_1 = 4$. 
	Then we have the following.

	\begin{enumerate}
		\item $\Apery(H, 7) = \{0,39,43,47,17\}\cup\{34,51\}$. Hence $k[H]/(t^7)$ is stretched.
		\item $\PF(H) = \{32,36,40,44\}$.
		\item $I_H=\detid_2 \begin{pmatrix}
	X_1^5 & X_2 & X_3 & X_4 & X_5\\
	X_2 & X_3 & X_4 & X_5^3 & X_1^3
\end{pmatrix}$.
		\item Since $h_1 = 4 > 2 = \ell^2 - \ell - \alpha$, $\gr(k[H])$ is Cohen-Macaulay by Theorem \ref{gr:j=n-1}.
	\end{enumerate}

\end{ex}

The following example says that the condition $\sharp \PF(H) = n-1$ can not be removed from our argument.

\begin{ex}[\cite{goto}]\label{example2}
Let $H=\left<a_1 = 6, a_2 = 11, a_3 = 13, a_4 = 16, a_5 = 20\right>$. Then $\Apery(H,6)=\{0,11,13,16,20\}\cup\{27\}$. This also implies by Lemma \ref{Apery} that $k[H]/(t^6)$ is stretched. We have $\PF(H) = \{7,14,21\}$, whence $H$ does not satisfy the condition (3) in Theorem \ref{main theorem} because it is an arithmetic sequence of length $3 \ne 5-1$. Therefore the defining ideal $I_H$ of $k[H]$ can not have the form as in the condition (2). In fact, a direct computation gives 
$$
I_H=\rmI_2 \begin{pmatrix}
	X_3 & X_1^3 & X_5 & X_1^2X_2 & X_2X_4\\
	X_1 & X_2 & X_3 & X_4 & X_5
\end{pmatrix} + (X_2^2-X_1X_4, X_4^2-X_1^2X_5).
$$
\end{ex}

Remember that the assumption that $I_H$ is generated by $2 \times 2$ minors of a $2 \times n$ matrix implies $\sharp \PF(H) = n-1$.

The next example shows how the condition that $k[H]/(t^{a_1})$ is stretched gives a strong restriction on the defining ideals $I_H$, and also demonstrates that $I_H$ certainly does not have the desired system of generators when $\PF(H)$ does not form an arithmetic sequence, even if $\sharp \PF(H) = n-1$.

\begin{ex}
	Let $H=\left<a_1 = 8,a_2 = 9,a_3 = 31,a_4 = 37,a_5 = 38\right>$. Then we have the following.
	
	\begin{enumerate}
		\item $\Apery(H,8) = \{0,9,31,37,38\}\cup\{18,27,36\}$. Hence $k[H]/(t^8)$ is stretched.
		\item $\PF(H) = \{23,28,29,30\}$.
		\item $k[H]/(t^{a_1}) \cong k[X_2,X_3,X_4,X_5]/J$, where
		$$
		J= (X_2^5, X_2X_3, X_2X_4, X_2X_5, X_3^2, X_3X_4, X_3X_5, X_4^2,X_4X_5, X_5^2).
		$$
		\item $I_H = (X_2^5 - X_1X_4, X_2 X_3-X_1^5, X_2X_4 - X_1X_5, X_2X_5 - X_1^2X_3, X_3^2 - X_1X_2^6, X_3 X_4 - X_1^4X_2^4, X_3X_5 - X_1^3X_2^5, X_4^2 - X_1^2X_2^3X_3, X_4X_5 - X_1X_2^4X_3, X_5^2-X_1X_3X_4)$. Furthermore, it can be expressed as follows by using the matrix representation:
		$$
		I_H = \detid_2 
		\begin{pmatrix}
			X_1 & X_2^4 & X_3 & X_4 & X_5\\
			X_2 & X_4 & X_1^4 & X_5 & X_1X_3
		\end{pmatrix} + \detid_2
		\begin{pmatrix}
			X_1^3 & X_2 & X_3 & X_4 & X_5\\
			X_3 & X_1^2 & X_5 & X_1X_2^4 & X_2^5
		\end{pmatrix} 
		$$
	\end{enumerate}
\end{ex}

We close with the following example, which leads us to expect that Conjecture \ref{conj} has an affirmative answer for broader classes of numerical semigroups.

\begin{ex}
	Let $H=\left<a_1 = 8, a_2 = 9 ,a_3 = 28,a_4 = 29,a_5 = 15\right>$.
	Then we have the following.
	
	\begin{enumerate}
		\item $\Apery(H, 8) = \{0,9,28,29,15\} \cup \{18,27,30\}$. Hence $k[H]/(t^8)$ is not stretched.
		\item $\PF(H) = \{10,11,12,13\}$.
		\item $I_H=\detid_2\begin{pmatrix}X_1 & X_2^3 & X_3 & X_4 & X_5\\
			X_2 & X_3 & X_4 & X_5^2 & X_1^2
			\end{pmatrix}.
			$
	\end{enumerate}
\end{ex}

\section*{Ackknowledgements}

The authors are grateful to P. L. Huong, S. Iai, and M. Ikuma for their helpful discussions.

\end{document}